\newcommand{\diag}{\textup{diag}}
\newcommand{\tr}{\textup{tr}}
\newcommand{\ev}{\textup{ev}}
\newcommand{\id}{\textup{id}}
\newcommand{\aff}{\textup{Aff}}
\newcommand{\saff}{\textup{SAff}}
\newcommand{\ad}{\textup{Ad}}
\newcommand{\inv}{\textup{Ell}}
\newcommand{\cu}{\textup{Cu}}
\theoremstyle {plain}
\newtheorem {theorem}{Theorem}[section]
\newtheorem {lemma}[theorem]{Lemma}
\newtheorem {proposition}[theorem]{Proposition}
\newtheorem {corollary}[theorem]{Corollary}
\newtheorem {conjecture}[theorem]{Conjecture}
\theoremstyle {definition}
\newtheorem {definition}[theorem]{Definition}
\newtheorem {remark}[theorem]{Remark}
\numberwithin{equation}{section}
\begin{document}

\author{Bhishan Jacelon}
\address{Mathematisches Institut\\Einsteinstr. 62\\48149 M\"unster\\Germany} 
\email{b.jacelon@uni-muenster.de}
\date{\today}
\title{A simple, monotracial, stably projectionless $C^*$-algebra}
\keywords{Strongly self-absorbing $C^*$-algebra, stably projectionless $C^*$-algebra, classification}
\subjclass[2000]{46L35, 46L05}
\thanks{Research supported by the ERC through AdG 267079.}

\maketitle

\begin{abstract}
We construct a simple, nuclear, stably projectionless $C^*$-algebra $W$ which has trivial $K$-theory and a unique tracial state, and we investigate the extent to which $W$ might fit into the hierarchy of strongly self-absorbing $C^*$-algebras as an analogue of the Cuntz algebra $\mathcal{O}_2$. In this context, we show that every nondegenerate endomorphism of $W$ is approximately inner and we construct a trace-preserving embedding of $W$ into the central sequences algebra $M(W)_\infty \cap W'$. We conjecture that $W\otimes W \cong W$ and we note some implications of this, for example that $W$ would be absorbed tensorially by a certain class of nuclear, stably projectionless $C^*$-algebras. Finally, we explain why $W$ may play some role in the classification of such algebras.
\end{abstract}
\maketitle

\section{Introduction} \label{introduction}

In the study of operator algebras, classification has always been a central theme. The classification of nuclear $C^*$-algebras was initiated by Elliott, who used ordered $K$-theory to classify approximately finite dimensional (AF) $C^*$-algebras in \cite{Elliott:1976kq} (building on earlier work of Glimm \cite{Glimm:1960qh} and Bratteli \cite{Bratteli:1972fj}) and approximately circle (A$\mathbb{T}$) algebras of real rank zero in \cite{Elliott:1993kq}. Around 1990, Elliott conjectured that the class of all separable, nuclear $C^*$-algebras could be classified by an invariant $\inv(\cdot)$ based on $K$-theory. Augmenting the invariant to include tracial data (and the natural pairing between traces and $K$-theory), this has come to be known as the Elliott Conjecture, and the project to establish its veracity, the Elliott Programme. The survey article \cite{Elliott:2008qy} gives an excellent overview of the history and recent developments of the Programme, and a more detailed exposition can be found in R\o rdam's monograph \cite{Rordam:2002yu}.

Today, it is known that the Elliott Conjecture does not hold in full generality. The ideas of \cite{Villadsen:1998ys} were used by R\o rdam \cite{Rordam:2003rz} and Toms \cite{Toms:2008hl} to construct simple, separable, nuclear non-isomorphic $C^*$-algebras with the same Elliott invariant. These counterexamples to the Conjecture can be dealt with in two ways: (1) enlarge the invariant to include the Cuntz semigroup (which is sensitive enough to be able to distinguish between the counterexamples of R\o rdam and Toms, and, for well-behaved $C^*$-algebras, can be recovered functorially from $K$-theory and traces---see \cite{Brown:2008mz}), or (2) impose further regularity conditions on the $C^*$-algebras to be classified. This article is in the spirit of option (2), and the relevant regularity property is $\mathcal{Z}$-stability.

The Jiang-Su algebra $\mathcal{Z}$ is a simple, separable, nuclear, infinite dimensional, projectionless $C^*$-algebra which has the same Elliott invariant as $\mathbb{C}$ (see \cite{Jiang:1999hb} and also \cite{Rordam:2009qy} for some alternative descriptions of $\mathcal{Z}$). A $C^*$-algebra $A$ is `$\mathcal{Z}$-stable' if $A\otimes \mathcal{Z} \cong A$; the counterexamples of R\o rdam and Toms each involve a pair of non-isomorphic $C^*$-algebras which have the same Elliott invariant, but one of which is not $\mathcal{Z}$-stable. Thus the class of $\mathcal{Z}$-stable $C^*$-algebras is the largest class in which the Elliott Conjecture can be expected to hold.

The term `strongly self-absorbing' was coined by Toms and Winter in \cite{Toms:2007uq} to describe a handful of algebras which, like $\mathcal{Z}$, have played pivotal roles in the classification programme. Specifically, a separable unital $C^*$-algebra $\mathcal{D} \ne \mathbb{C}$ is \emph{strongly self-absorbing} if there is an isomorphism $\varphi: \mathcal{D} \rightarrow \mathcal{D} \otimes \mathcal{D}$ such that $\varphi$ is approximately unitarily equivalent to the first factor embedding $\id_\mathcal{D}\otimes 1_\mathcal{D}$ (written $\varphi \sim_{a.u.} \id_\mathcal{D} \otimes 1_\mathcal{D}$), which means that there is  a sequence of unitaries $(u_n)_{n=1}^\infty$ in $\mathcal{D} \otimes \mathcal{D}$ such that $\varphi(a) = \lim_{n\rightarrow \infty} u_n(a\otimes 1)u_n^*$ for every $a\in \mathcal{D}$. As for $\mathcal{Z}$, a $C^*$-algebra $A$ is `$\mathcal{D}$-stable' if $A\otimes \mathcal{D} \cong A$.

If $A$ is strongly self-absorbing then $A$ is simple and nuclear, and is either purely infinite or stably finite with a unique tracial state. Moreover, the list of known strongly self-absorbing algebras is short (and closed under $\otimes$): the Cuntz algebras $\mathcal{O}_2$ and $\mathcal{O}_\infty$, UHF algebras of infinite type $M_{p^\infty}$ (such as the CAR algebra $M_{2^\infty}$), $\mathcal{O}_\infty \otimes M_{p^\infty}$, and the Jiang-Su algebra $\mathcal{Z}$. One of the reasons that these algebras are important is that they provide localized versions of the Elliott Conjecture in the sense of \cite{Winter:2007qf}, i.e.\ theorems of the form $\inv(A) \cong \inv(B) \implies A \otimes \mathcal{D} \cong B \otimes \mathcal{D}$. For example, the classification of Kirchberg algebras (\cite{Kirchberg:2009vn}, \cite{Phillips:2000fj}) can be interpreted as the classification up to $\mathcal{O}_\infty$-stability of all simple, separable, unital, nuclear $C^*$-algebras that satisfy the UCT. At the other end of the spectrum, Winter proves in \cite{Winter:2007qf} and \cite{Winter:2010hl} that the Elliott Conjecture holds for a large class of stably finite $\mathcal{Z}$-stable $C^*$-algebras (which have finite decomposition rank, as defined in \cite{Kirchberg:2004qy}).

The known strongly self-absorbing algebras form a hierarchy, with $\mathcal{Z}$ at the bottom (every strongly self-absorbing algebra is $\mathcal{Z}$-stable---see \cite{Winter:2009yq}) and $\mathcal{O}_2$ at the top ($\mathcal{O}_2$ absorbs every strongly self-absorbing algebra---see \cite{Kirchberg:2000kq}). Every purely infinite algebra in this hierarchy has a stably finite analogue (most notably, $\mathcal{Z}$ corresponds to $\mathcal{O}_\infty$), except for $\mathcal{O}_2$. That is, there is no stably finite strongly self-absorbing $C^*$-algebra $A$ with $K_*(A)=0$. In fact, it is not hard to see that if $A$ is  separable and stably finite  with $K_0(A) = 0$ then $A \otimes \mathcal{K}$ cannot have any full projections; in particular, $A$ must be nonunital (in fact, stably projectionless if $A$ is also simple). This explains the gap since the definition of `strongly self-absorbing'  conspicuously involves a unit. What is therefore needed to fill this gap is a well-behaved notion of `strongly self-absorbing' that makes sense for nonunital $C^*$-algebras and agrees with the existing definition for unital algebras.

There are a few equivalent characterizations of strongly self-absorbing $C^*$-algebras, some more amenable than others to a sensible interpretation in the nonunital case. The above definition is obviously troublesome and highlights a general problem: If $A$ is a nonunital $C^*$-algebra without projections then there are no obvious $^*$-homomorphisms from $A$ to $A\otimes A$; in particular, there is no obvious way of making sense of  an infinite tensor product $A^{\otimes \infty}$. On the other hand, if $A \ne \mathbb{C}$ is separable and unital, then $A$ is strongly self-absorbing if and only if (with some redundancy)

\begin{enumerate}[(i)]
\item $A \cong A \otimes A$;
\item every unital endomorphism of $A$ is approximately inner;
\item $A$ admits an asymptotically central sequence of unital endomorphisms (see section \ref{W is approx divisible}).
\end{enumerate}

These conditions also make sense if $A$ is nonunital (provided that we replace `unital' by `nondegenerate' where appropriate) and we could think of taking some subset of these as a general definition of strongly self-absorbing. It is not clear how well-behaved such a definition would be, but the goal of this paper is to find a stably projectionless $C^*$-algebra $W$ with trivial $K$-theory and a unique tracial state, and at least see how far these properties hold for $W$. The hope is that $W$ will play a role in the classification of stably projectionless $C^*$-algebras, similar to the roles played by $\mathcal{O}_\infty$ and $\mathcal{Z}$ in the classification of purely infinite and stably finite algebras respectively. 

Indeed, $W$ itself arises from a class of stably projectionless algebras which have been completely classified (by Razak \cite[Theorem 1.1]{Razak:2002kq} and Tsang \cite[Theorem 5.1]{Tsang:2005fj}). These algebras are inductive limits of building blocks of the form
\[
A(n,n') := \{ f\in C([0,1], M_{n'}) : f(0) = \diag(\overbrace{c,\ldots,c}^a,0_n) \: \text{and} \: f(1) = \diag(\overbrace{c,\ldots, c}^{a+1}),  \: c\in M_n \},
\]
where $n$ and $n'$ are natural numbers with $n|n'$ and  $a :=\frac{n'}{n}-1>0$. Each building block is stably projectionless and has trivial $K$-theory, so the Elliott invariant is purely tracial.

\begin{theorem}[Razak] \label{class}
Let $A$ and $B$ be simple inductive limits of (countably many) building blocks. If $(T^+A, \Sigma_A)$ is isomorphic to $(T^+B, \Sigma_B)$ then $A$ is isomorphic to $B$.
\end{theorem}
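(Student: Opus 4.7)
The plan is to prove this via the standard Elliott intertwining argument, reducing the classification of the limit algebras to a careful understanding of morphisms between the building blocks $A(n,n')$ themselves. Write $A = \varinjlim(A_i, \varphi_i)$ and $B = \varinjlim(B_j, \psi_j)$ with each $A_i, B_j$ a building block, and suppose $\gamma : (T^+B, \Sigma_B) \to (T^+A, \Sigma_A)$ is the given isomorphism of invariants. The goal is to construct, alternately, *-homomorphisms $\alpha_i : A_i \to B_{j(i)}$ and $\beta_j : B_j \to A_{i(j)}$ which, up to a summable error, induce the restrictions of $\gamma^{-1}$ and $\gamma$ to finite stages, and which combine with the connecting maps $\varphi_i, \psi_j$ into an approximately commuting diagram. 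Taking limits of the resulting triangles then produces mutually inverse isomorphisms $A \to B$ and $B \to A$.

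The two analytic ingredients driving the intertwining are an existence theorem and a uniqueness theorem at the level of building blocks. For existence, given a morphism of pairs $(T^+B_j, \Sigma) \to (T^+A_i, \Sigma)$ and a tolerance $\varepsilon > 0$, one must produce a *-homomorphism $A_i \to B_j \otimes \mathcal{K}$ (or rather into some later stage $B_{j'}$) that realizes this trace map to within $\varepsilon$ on a prescribed finite set. Since each $A(n,n')$ is a 1-dimensional NCCW complex, such a *-homomorphism is determined by a continuous path on $[0,1]$ of unordered eigenvalue tuples with values in the spectrum of the target, compatible with the boundary identifications. The problem is therefore to approximate the given trace map by a choice of spectral data along the interval that correctly meets the prescribed multiplicities at the endpoints $0$ and $1$; this is a combinatorial approximation argument, where the marker $\Sigma$ is what ensures that the boundary multiplicities can actually be matched.

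For uniqueness, two *-homomorphisms $\mu, \nu : A_i \to B_j$ (or into a later stage) that induce the same map on $(T^+, \Sigma)$ up to a small error on a finite set should be approximately unitarily equivalent on that finite set. Because the codomain is a continuous field of matrix algebras over $[0,1]$, this follows from pointwise comparison of eigenvalue patterns combined with a standard Weyl-type argument: eigenvalue lists that are close in distribution can be continuously rearranged by unitaries, and the rearrangement can be chosen to respect the endpoint constraints by passing deep enough into the inductive system to absorb any discrete mismatch.

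The main obstacle is the existence step: approximating an abstract positive-cone morphism by honest spectral data that simultaneously matches both endpoint multiplicity conditions of the building blocks is a genuinely combinatorial problem, and it is here that the specific form of $A(n,n')$ and the role of the ordered structure $\Sigma$ are essential. Uniqueness and the intertwining are, by comparison, routine once existence has been established, and simplicity of $A$ and $B$ is used to guarantee that the connecting *-homomorphisms produced are sufficiently nondegenerate for the intertwining to close up at the limit.
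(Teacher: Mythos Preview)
The paper does not supply its own proof of this theorem; it is quoted from Razak's original article \cite{Razak:2002kq} and used as a black box. So there is no ``paper's proof'' to compare against directly.

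That said, your outline is an accurate high-level description of Razak's strategy, and the paper does restate the two key technical ingredients you identify: local existence (Proposition~\ref{local existence}) and local uniqueness (Proposition~\ref{local uniqueness}). Your description of existence as a combinatorial eigenvalue-matching problem along $[0,1]$ with endpoint constraints, and of uniqueness as a Weyl-type eigenvalue comparison, is the right picture. Note, however, that what you have written is a plan rather than a proof: the substantial content of Razak's theorem lies precisely in carrying out those two local steps, and in particular the existence step (matching the multiplicity discrepancy at the two endpoints while approximating a prescribed affine map on traces) is where most of the 80-odd pages of \cite{Razak:2002kq} are spent. Your sketch correctly flags this as the main obstacle but does not resolve it, so as it stands it is a faithful roadmap of Razak's argument rather than an independent proof.
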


Here, $T^+A$ is the cone of densely defined lower semicontinuous traces on $A$ and $\Sigma_A$ is the compact convex subset of $T^+A$ consisting of those (bounded) traces of norm $\le 1$. We will call an inductive limit of countably many building blocks a \emph{Razak algebra}. (We may always assume that the connecting maps are injective---see section \ref{building blocks} below.)

Theorem \ref{class} has been superseded by the main result of \cite{Robert:2010qy}. There, a functor based on the Cuntz semigroup is used to classify (not necessarily simple) inductive limits of one-dimensional NCCW-complexes with trivial $K_1$-groups. In the case of simple $C^*$-algebras with trivial $K$-theory, this reduces to the following (see \cite[Corollary 6.2.4]{Robert:2010qy}).

\begin{theorem}[Robert] \label{class2}
Let $A$ and $B$ be inductive limits of one-dimensional NCCW-complexes which have trivial $K_1$-groups, such that $A$ and $B$ are simple, nonunital and have trivial $K$-theory. If $(T^+A, \Sigma_A)$ is isomorphic to $(T^+B, \Sigma_B)$ then $A$ is isomorphic to $B$.
\end{theorem}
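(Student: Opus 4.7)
The plan is to deduce Theorem \ref{class2} directly from the main classification theorem of \cite{Robert:2010qy}, which classifies inductive limits $A$ of one-dimensional NCCW-complexes with vanishing $K_1$ via the functorial invariant $(\cu(A),[s_A])$ consisting of the (refined) Cuntz semigroup together with the class of a strictly positive element. The task then reduces to showing that, under the additional assumptions of simplicity, nonunitality and $K_0(A)=0$, this Cuntz-semigroup invariant is completely determined by the scaled trace cone $(T^+A,\Sigma_A)$.

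The key structural observation is that, because $A$ is simple, stably finite and stably projectionless with $K_0(A)=0$, the Cuntz semigroup $\cu(A)$ contains no nonzero compact elements (no Cuntz class is represented by a projection), so every Cuntz class is \emph{purely non-compact}. For algebras in the class of interest, two positive elements $a,b\in A\otimes\mathcal{K}$ are then Cuntz-equivalent if and only if their dimension functions $d_\tau(a)=\lim_n\tau(a^{1/n})$ and $d_\tau(b)$ agree for all $\tau\in T^+A$, and every element of $\cu(A)$ is realised by a lower semicontinuous linear (equivalently, affine with value $0$ at $0$) function $T^+A\to[0,\infty]$. One therefore expects a natural identification
\[
\cu(A) \;\cong\; \mathrm{LAff}_+(T^+A)\setminus\{0\}\;\sqcup\;\{0\},
\]
under which $[s_A]$ corresponds to the functional $\tau\mapsto\|\tau\|$; the scale $\Sigma_A=\{\tau\in T^+A:\tau([s_A])\le 1\}$ then recovers $[s_A]$ intrinsically from the pair $(T^+A,\Sigma_A)$. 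Once this identification is established, any isomorphism $(T^+A,\Sigma_A)\cong(T^+B,\Sigma_B)$ transports to an isomorphism $\cu(A)\cong\cu(B)$ sending $[s_A]$ to $[s_B]$, and Robert's theorem supplies $A\cong B$.

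The main difficulty lies in establishing the identification $\cu(A)\cong\mathrm{LAff}_+(T^+A)$ in the required functorial way. My approach would be inductive in spirit: first verify the identification on each building block, a one-dimensional NCCW-complex with trivial $K_1$, where the Cuntz semigroup admits an explicit parametrisation by rank functions on the spectrum together with boundary data; and then pass to simple inductive limits using continuity of $\cu$ on sequential inductive limits and a compatible continuity of $T^+(\cdot)$ and $\mathrm{LAff}_+$. Simplicity and the vanishing of $K_0$ are precisely the hypotheses that rule out compact elements surviving in the limit, so no spurious K-theoretic data needs to be tracked alongside the tracial functions, and the tracial cone $(T^+A,\Sigma_A)$ captures the entire Cuntz-semigroup invariant required by Robert's classification.
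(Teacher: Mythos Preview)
Your proposal is essentially correct and matches what the paper does: the paper does not supply its own proof of Theorem~\ref{class2} but cites it as \cite[Corollary~4]{Robert:2010qy}, obtained exactly by the reduction you describe---Robert's main theorem classifies by $(\cu(A),[s_A])$, and under simplicity, nonunitality and $K_0(A)=0$ this invariant collapses to $(T^+A,\Sigma_A)$ because $\cu(A)$ has no nonzero compact elements and the rank map $\cu(A)\to\mathrm{LAff}_+(T^+A)$ becomes an ordered-semigroup isomorphism carrying $[s_A]$ to the norm functional.

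One comment on your final paragraph: the route via computing $\cu$ on each building block and invoking continuity is workable, but more laborious than necessary, since the building blocks are not simple and their Cuntz semigroups carry genuine non-tracial (integer rank and boundary) data that only disappears in the simple limit. The cleaner path, and the one taken in \cite{Robert:2010qy} and the references it relies on (notably \cite{Brown:2008mz}, \cite{Brown:2007rz}), is to work directly with the limit algebra: simplicity together with stable rank one and $\mathcal{Z}$-stability (both of which hold for these inductive limits) give the structure theorem $\cu(A)\setminus\{0\}\cong\mathrm{LAff}_+(T^+A)\setminus\{0\}$ for purely non-compact elements in one stroke, without having to track how the building-block invariants assemble.
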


For the range of the invariant, we still refer to \cite{Tsang:2005fj} (although an alternative proof will be provided in section \ref{W is self-absorbing} of this article).

\begin{theorem}[Tsang] \label{range}
Let $C$ be a topological convex cone that has a metrizable Choquet simplex as a base and let  $\omega$ be a faithful lower semicontinuous linear map $C \rightarrow [0,\infty]$. Then there exists a simple Razak algebra $A$ such that $(T^+A, \Sigma_A) = (C, \omega^{-1}([0,1]))$.
\end{theorem}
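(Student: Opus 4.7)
The plan is to build $A$ as an inductive limit $A = \varinjlim A_i$ of Razak building blocks $A_i = A(n_i, n_i')$ whose induced inverse system of trace cones has inverse limit $(C, \omega)$. Before constructing anything, I would describe $T^+A(n,n')$ explicitly: every densely defined lower semicontinuous trace on $A(n,n')$ is bounded and has the form $\tau_\mu(f) = \int_0^1 \tr(f(t))\, d\mu(t)$ for a unique finite positive Borel measure $\mu$ on $[0,1]$ (the boundary data at $0$ and $1$ are compatible with any such $\mu$). Thus $T^+A(n,n')$ is affinely homeomorphic to the cone of finite positive measures on $[0,1]$, and the trace-norm functional is a fixed multiple of total mass. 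In particular, convex combinations of point-mass measures at rational points of $[0,1]$ realise arbitrary finite-dimensional simplices.

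Next, I would use Choquet theory to present the base simplex $K$ of $C$ as an inverse limit of finite-dimensional metrizable simplices $K_i$ along affine surjections $\pi_i \colon K_{i+1} \to K_i$, and extend the scaling $\omega$ compatibly. By refining the inverse system I may arrange that each $K_i$ is identified with the simplex of probability measures on a finite set $\{t^{(i)}_1, \dots, t^{(i)}_{k_i}\} \subset [0,1]$, that the $\pi_i$ are given by nonnegative rational multiplicity matrices (each $t^{(i+1)}_j$ mapped to a weighted sum of the $t^{(i)}_\ell$), and that the scaling $\omega$ is encoded by the choice of matrix sizes $n_i'$. This combinatorial data is precisely the kind that describes diagonal *-homomorphisms between Razak building blocks.

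The main construction is then to realise the inverse system $(K_i, \pi_i)$ via honest *-homomorphisms $\varphi_i \colon A_i \to A_{i+1}$. Taking $n_{i+1}, n_{i+1}'$ large enough to accommodate the prescribed multiplicities, I would define each $\varphi_i$ as a direct sum of point-evaluations $f \mapsto f(t^{(i)}_\ell)$ with the specified multiplicities, conjugated into $M_{n'_{i+1}}$ by a suitable unitary path and interpolated continuously so that the boundary blocks $\diag(c,\ldots,c,0_n)$ at $0$ and $\diag(c,\ldots,c)$ at $1$ remain of the required form. The main obstacle will be precisely this compatibility: the rigidity of the boundary blocks forces constraints on the multiplicities used at the endpoints (and on the way $\varphi_i$ routes matrix entries across $[0,1]$), so the choice of $n_i, n_i'$ and the exact combinatorics of the eigenvalue pattern must be made carefully, in the style of Razak's and Tsang's original building-block-to-building-block maps. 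An additional point: the scaling $\omega$ may take the value $+\infty$ on some rays of $C$, which corresponds to unbounded lower semicontinuous traces on $A$; this must be accommodated by allowing the ratio $n_{i+1}'/n_i'$ to grow suitably.

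Finally, simplicity will be arranged by the standard diagonalisation trick of interposing, between the $\varphi_i$ already constructed, additional connecting maps whose eigenvalue functions have progressively denser range in $[0,1]$. This forces any nontrivial closed ideal to vanish in the limit and does not disturb the inverse limit of trace cones (since dense perturbations of the evaluation points converge back to the same affine maps on the Choquet simplex). The resulting algebra $A$ is then a simple Razak algebra with $(T^+A, \Sigma_A) \cong (C, \omega^{-1}([0,1]))$, as required; by Theorem \ref{class} it is moreover the unique such algebra up to isomorphism.
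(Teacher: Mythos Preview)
Your sketch is sound and essentially reproduces Tsang's original direct construction, which the paper cites but does not follow. The paper's own alternative proof (contained in Proposition~\ref{af main} and the Remark following Corollary~\ref{w-stability}) takes a genuinely different and much shorter route: given $(C,\omega)$, it first uses Blackadar's theorem to find a simple unital AF algebra whose tracial state space is the base simplex $\Delta$, and then passes to a hereditary subalgebra $\overline{b(B\otimes\mathcal{K})b}$ chosen via the Cuntz semigroup so that the norm map becomes $\omega$. The desired Razak algebra is then simply $B\otimes W$: since $B$ is AF and a building block tensored with a matrix algebra is again a building block, $B\otimes W$ is a simple inductive limit of building blocks, and because $W$ has a unique tracial state the pair $(T^+(B\otimes W),\Sigma_{B\otimes W})$ coincides with $(T^+B,\Sigma_B)\cong(C,\omega^{-1}([0,1]))$.

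Your approach has the virtue of being self-contained --- it does not presuppose that $W$ has already been built, nor does it import the AF trace-range machinery --- and it is the natural one if the goal is to exhibit the algebra concretely. The paper's approach trades the delicate combinatorics you describe (matching endpoint multiplicities, accommodating unbounded $\omega$, interposing maps for simplicity) for a single tensor product, but at the price of relying on the prior construction of $W$ and on nontrivial facts about the Cuntz semigroup of simple AF algebras. One small correction to your sketch: the trace cone of a single building block is parametrized by measures on $(0,1]$ rather than $[0,1]$ (the fibre at $0$ is redundant), though this does not affect the argument.
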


Suppose that $C=\mathbb{R}_+$ and let $\omega(x) = x$. By Theorems \ref{range} and \ref{class}, there exists a unique simple Razak algebra $W$ which has a tracial state $\tau$ such that $T^+W = \mathbb{R}_+ \tau$. That is, $W$ has a unique tracial state and every trace on $W$ is bounded. Note that $W \otimes \mathcal{K}$ is also a simple Razak algebra whose trace is unique up to scalar multiples, except that in this case the trace is unbounded. (This corresponds to taking  $\omega(x)= 0$ if $x=0$ and $\omega(x) = \infty$ otherwise.) Again, Theorem \ref{class} says that $W\otimes \mathcal{K}$ is the unique simple Razak algebra with this property.

It is easy to find a simple, nuclear,  nonunital $C^*$-algebra with a (unique) tracial state (for example, any proper hereditary subalgebra $A$ of $\mathcal{Z}$ has a tracial state, which is unique since $A$ is stably isomorphic to $\mathcal{Z}$), but to the author's knowledge, $W$ may be the first example of such an algebra which is stably projectionless. In particular, $W$ lives outside of the realm of $C^*$-algebras for which projections separate traces.

Note that Theorem \ref{class} also shows that $W$ absorbs the universal UHF algebra $\mathcal{Q}$, which implies that $W$ absorbs $\mathcal{Z}$ (see also section \ref{W is approx divisible}), and the Kirchberg-Phillips classification theorem (\cite{Kirchberg:2009vn}, \cite{Phillips:2000fj}) shows that $W\otimes A \cong \mathcal{O}_2 \otimes \mathcal{K}$ for any simple, separable, nuclear, stably infinite $C^*$-algebra $A$ that satisfies the UCT (see Theorem 8.4.1 and also Theorems 4.1.10 and 4.1.3 of \cite{Rordam:2002yu}). In particular, $W\otimes \mathcal{O}_2 \cong W \otimes \mathcal{O}_\infty \cong \mathcal{O}_2 \otimes \mathcal{K}$.

Finally, it should be noted that \cite{Robert:2010qy} shows that $W$ also arises from certain crossed products of $\mathcal{O}_2$ by $\mathbb{R}$. For $\lambda \in \mathbb{R}$, consider the action $\alpha$ of $\mathbb{R}$ on $\mathcal{O}_2  = C^*(s_1, s_2)$ given by
\[
\alpha_t(s_1) = e^{it}s_1, \qquad \alpha_t(s_2) = e^{\lambda it}s_2.
\]
These actions, and the associated crossed products $\mathcal{O}_2 \rtimes_\lambda  \mathbb{R}$, have been studied by many authors including Evans, Kishimoto and Kumjian (see \cite{Evans:1980kq}, \cite{Kishimoto:1996yu} and \cite{Kishimoto:1997kq}). Kishimoto and Kumjian proved that $\mathcal{O}_2 \rtimes_\lambda  \mathbb{R}$ is simple if and only if $\lambda$ is irrational; in this case $\mathcal{O}_2 \rtimes_\lambda  \mathbb{R}$ is stable, and is purely infinite if $\lambda<0$ and projectionless with a unique (unbounded) trace if $\lambda>0$. Moreover, Dean proves in \cite{Dean:2001qy} that for generic positive irrational $\lambda$, $\mathcal{O}_2 \rtimes_\lambda  \mathbb{R}$ can be written as a countable inductive limit of certain one-dimensional subhomogeneous $C^*$-algebras called NCCW complexes (see sections \ref{building blocks} and \ref{endomorphisms of W} below), which are shown in \cite{Robert:2010qy} to have trivial $K_1$-groups. Since, by \cite[Theorem IV.2]{Connes:1981qf}, these crossed products have trivial $K$-theory, we can apply the Kirchberg-Phillips classification theorem in the former case, and the classification theorem of \cite{Robert:2010qy} in the latter, to deduce that
\[
\mathcal{O}_2 \rtimes_\lambda  \mathbb{R} \cong \left\{
\begin{array}{lll}
\mathcal{O}_2 \otimes \mathcal{K} & \text{for every} & \lambda\in\mathbb{R}_-\backslash \mathbb{Q}\\
W\otimes \mathcal{K} & \text{for generic} & \lambda\in\mathbb{R}_+\backslash \mathbb{Q}.
\end{array}
\right.
\]

The paper is organized as follows. We first recall some basic facts about building blocks and establish notation in section \ref{building blocks}. In section \ref{construction of W} we will explicitly exhibit $W$ in a manner analogous to the construction of the Jiang-Su algebra $\mathcal{Z}$. Next, in section \ref{endomorphisms of W} we characterize $W$ as the unique terminal object in a certain category, from which it will follow that every nondegenerate endomorphism of $W$ is approximately inner. In section \ref{W is self-absorbing}, we make some remarks on the conjecture that $W\otimes W \cong W$ (an incorrect proof of which was offered in an earlier version of this article). We also prove here that, if this conjecture were to be true (and the Elliott Conjecture certainly predicts that it is), then among the $C^*$-algebras classified in \cite{Robert:2010qy}, those that are simple and have trivial $K$-theory would all absorb $W$ tensorially. Section \ref{W is approx divisible} contains a complete proof of \cite[Proposition 4.1]{Toms:2005kq}, which says that every simple Razak algebra is approximately divisible, and we show how the proof can be adapted to construct a trace-preserving embedding of $W$ into the central sequences algebra $M(W)_\infty \cap W'$ (where $M(W)$ is the multiplier algebra of $W$).

\subsection*{Acknowledgements} This work was part of my doctoral research at the University of Glasgow. I am indebted to: my supervisor Simon Wassermann, and also Stuart White, Rob Archbold and Ulrich Kraehmer, for carefully reading earlier versions  of the manuscript; Wilhelm Winter for suggesting the topic and for many illuminating discussions; and Andrew Toms and George Elliott for their very helpful correspondence. I am also enormously grateful to Luis Santiago and Leonel Robert for helping to uncover the error in the attempted proof of Conjecture \ref{absorbing}.

\section{The building blocks} \label{building blocks}

Throughout this section, let $A$ be the building block
\[
A(n,n') := \{ f\in C([0,1], M_{n'}) : f(0) = \diag(\overbrace{c,\cdots,c}^a,0_n) \: \text{and} \: f(1) = \diag(\overbrace{c,\cdots, c}^{a+1}),  \: c\in M_n \},
\]
where $n$ and $n'$ are natural numbers with $n|n'$ and  $a :=\frac{n'}{n}-1>0$. Each building block is a  one-dimensional NCCW-complex, i.e.\ a pullback of the form
\[
\xymatrix{
A \ar[r] \ar[d] & M_n \ar[d]\\
C[0,1]\otimes M_{n'} \ar[r]_{\partial} & M_{n'} \oplus M_{n'}}
\]
(see \cite{Eilers:1998yu} and also section \ref{endomorphisms of W} below), is stably projectionless (because of the multiplicity differences at the endpoints), and has trivial $K$-theory (which can be seen from the Mayer-Vietoris sequence of \cite{Schochet:1984kx}). We will call an inductive limit of countably many building blocks a \emph{Razak algebra}. Razak algebras are separable, nuclear, satisfy the UCT and  are completely classified by tracial data when simple (Theorems \ref{class} and \ref{range}). We now establish some basic notation that will be used throughout the paper.

\subsection*{Irreducible representations.} Every ideal $I$ of $A$ is of the form $I=\{f\in A : f|_{\Gamma(I)}=0\}$ for some unique closed subset $\Gamma(I)\subseteq \mathbb{T} = [0,1]/\{0,1\}$. In particular, the primitive ideals of $A$ are precisely those of the form $I_x = \{f\in A : f(x) = 0\}$ for $x\in [0,1]$, and we can describe all the irreducible representations of $A$. They are (up to unitary equivalence) the evaluation maps $\ev_s: A \rightarrow M_{n'}$ for $s\in (0,1)$ together with `evaluation at the irreducible fibre at infinity', i.e.\ $\ev_\infty: A\rightarrow M_n$ is such that $\ev_0 = \bigoplus_{i=1}^a \ev_\infty$.

\subsection*{Connecting maps.} Given the above characterization of ideals of $A$, it is easy to see that any proper quotient of $A$ has nontrivial projections. Therefore, if $B$ is another projectionless $C^*$-algebra then every nonzero $^*$-homomorphism $\varphi: A \rightarrow B$ is injective. (This means that we may always assume that Razak algebras have injective connecting maps.) If $B$ is also a building block then for each $x\in [0,1]$ we will denote by  $\varphi^x:A\rightarrow B_x$ the morphism $\varphi^x(f) = \varphi(f)(x)$.

\subsection*{Traces.} We write $T^+A$ for the cone of densely defined lower semicontinuous traces on $A$. Every trace on the building block $A$ is bounded and has the form $\tau = \tr \otimes \mu$, where $\tr$ is the normalized trace on $M_{n'}$ and $\mu$ is some positive Borel measure on $(0,1]$. We will write $T^+_1A$ for the simplex of tracial states on $A$ (corresponding to Borel probability measures on $(0,1]$), $\Sigma_A$ for those traces of norm at most one, and we let $\aff_0T^+A$ denote the ordered vector space of continuous real-valued linear functionals on the cone $T^+A$.

Following \cite{Razak:2002kq}, we equip $\aff_0T^+A$ with two different norms. First, we define $\|\cdot\|_A$ by $\|f\|_A := \sup \{|f(\tau)| : \tau\in \Sigma_A\}$. Second, we use the order unit norm given by the following: fix $\eta \in \aff_0T^+A$ with $\inf \{\eta(\tau) : \|\tau\| = 1\} >0$, so that $\eta$ is an order unit of $\aff_0T^+A$, and denote by $\Sigma_\eta$ the closed convex set $\{\tau\in T^+A : \eta(\tau) = 1\}$; we then get a corresponding order unit norm  $\|\cdot\|_\eta$ given by $\|f\|_\eta := \sup \{|f(\tau)| : \tau \in \Sigma_\eta\}$. It is not hard to show that the norms $\|\cdot\|_A$ and $\|\cdot\|_\eta$ are equivalent for the building block $A$, so in particular $\Sigma_\eta$ is a \emph{compact} base of $T^+A$. In fact, we can say exactly what the spaces $(\aff_0 T^+A, \|\cdot\|_A)$ and $(\aff_0 T^+A, \|\cdot\|_\eta)$ look like.
\begin{enumerate}[(i)]
\item Define $C[0,1]_a := \{f\in C[0,1] : f(0) = \frac{a}{a+1} f(1)\}$. Then there is an isometric isomorphism of ordered Banach spaces $\iota: (\aff_0 T^+A, \|\cdot\|_A) \rightarrow C[0,1]_a$, $\iota(f)(t)=f(\tr\otimes\delta_t)$ (where $\delta_t$ denotes the point mass at $t$), which moreover preserves infima:
\[
\inf\{f(\tau) : \tau\in T^+_1A\} = \inf \iota(f) \quad \text{for every} \quad f\in\aff_0T^+A.
\]
\item There is an isomorphism of order unit spaces $\iota'_\eta: (\aff_0 T^+A, \eta) \rightarrow (C_\mathbb{R}(\mathbb{T}), 1)$ given by $\iota'_\eta(f) = \frac{\iota(f)}{\iota(\eta)}$, which again preserves infima: $\inf\{f(\tau) : \tau \in \Sigma_\eta\} = \inf \iota'_\eta(f)$ for $f\in \aff_0T^+A$.
\end{enumerate}

Finally, there is an embedding $\psi_A$ of $\aff_0T^+A$ into the set $A_{sa}$ of self-adjoint elements of $A$ given by
\[
\psi_A(f)(t) = \frac{a+1}{a+t}\left(\overbrace{f(\tr\otimes \delta_t)1_n \oplus \cdots \oplus f(\tr \otimes \delta_t)1_n}^a \oplus t f(\tr\otimes \delta_t)1_n\right),
\]
which is right-inverse to the usual map $\rho_A: A_{sa} \rightarrow \aff_0T^+A$. That is, $\psi_A$ satisfies $\tau(\psi_A(f)) = f(\tau)$ for every $\tau \in T^+A$. The embedding is natural in the sense that if $B$ is another building block and $\varphi: A \rightarrow B$ is a $^*$-homomorphism then
\[
\tau(\varphi \circ \psi_A(f)) = \varphi^*\tau (\psi_A(f)) = f(\varphi^* \tau) = \varphi_*(f)(\tau) = \tau(\psi_B \circ \varphi_*(f))
\]
for every $\tau\in T^+B$ and $f\in \aff_0T^+A$. This justifies suppressing the notation $\psi_A$ and $\rho_A$, and in the sequel we will do so without comment (particularly in section \ref{endomorphisms of W}).

\section{The construction of $W$} \label{construction of W}

In this section we construct explicit connecting maps for $W$ by adapting the procedure of \cite[Proposition 2.5]{Jiang:1999hb}.

\begin{proposition} \label{sequence}
There exists an inductive sequence $(A_i, \varphi_i)$ of building blocks $A_i = A(n_i, (a_i+1)n_i)$ such that  each connecting map $\varphi_{ij}: A_i \rightarrow A_j$ is a $^*$-homomorphism of the form
\begin{equation} \label{diag}
\varphi_{ij}(f) = u\left(
\begin{array}{cccc}
f\circ \xi_1 &  & & \\
& f\circ \xi_2 & & \\
&  & \ddots & \\
&  &  & f\circ \xi_m\\
\end{array}
\right) u^*
\end{equation}
for some unitary $u\in C([0,1], M_{n_j'})$ and continuous maps $\xi_k: [0,1] \rightarrow [0,1]$ that satisfy
\begin{itemize}
\item[] \begin{equation} \label{small}
|\xi_k(x) - \xi_k(y)| \le (1/2)^{j-i} \quad \text{for every}\quad x,y \in [0,1] \quad \text{and} \quad 1\le k \le m;
\end{equation}
\item[] \begin{equation}\label{small3}
\bigcup_{k=1}^m \xi_k ([0,1]) = [0,1].
\end{equation}
\end{itemize}
\end{proposition}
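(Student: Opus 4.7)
The plan is to construct the inductive sequence by specifying at each stage a *-homomorphism $\varphi_{i,i+1}: A_i \to A_{i+1}$ of the diagonal form \eqref{diag}. Because this form is preserved under composition (after absorbing intermediate conjugating unitaries into a single one), the composed map $\varphi_{ij} = \varphi_{j-1,j} \circ \cdots \circ \varphi_{i,i+1}$ is automatically of the same form, with eigenvalue functions given by the compositions $\eta_{k_{j-1}}^{(j-1)} \circ \cdots \circ \eta_{k_i}^{(i)}$ of the single-step eigenvalue functions. Properties \eqref{small} and \eqref{small3} for $\varphi_{ij}$ then follow from appropriate single-step properties of the $\eta_k^{(i)}$.

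For the oscillation estimate \eqref{small}, I would arrange each $\eta_k^{(i)}$ to be $\frac{1}{2}$-Lipschitz. The composition of $j-i$ such maps is then $(1/2)^{j-i}$-Lipschitz, so its oscillation on $[0,1]$ is at most $(1/2)^{j-i}$. For the covering \eqref{small3}, I would require $\bigcup_k \eta_k^{(i)}([0,1]) = [0,1]$ at each step; this propagates through composition by a backward induction on the length of the chain (given $t \in [0,1]$, choose $s_{j-1}$ with $\eta_{k_{j-1}}^{(j-1)}(s_{j-1}) = t$, then $s_{j-2}$ mapping to $s_{j-1}$, and so on). A natural family of eigenvalue maps at each step is affine contractions $t \mapsto t/2 + c$ for various offsets $c \in [0,1/2]$, possibly augmented by constant maps.

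The main obstacle is realising such eigenvalue functions as the data of a genuine *-homomorphism between building blocks. The endpoint conditions on $A_{i+1}$ force $\varphi_{i,i+1}(f)(0)$ and $\varphi_{i,i+1}(f)(1)$ to lie in the subspaces $\diag(c',\ldots,c',0_{n_{i+1}})$ and $\diag(c',\ldots,c')$ respectively, with the same $c' \in M_{n_{i+1}}$ depending as a *-homomorphism on $f$. Reading off the irreducible representation content of $\ev_0 \circ \varphi_{i,i+1}$ and $\ev_1 \circ \varphi_{i,i+1}$ using the decompositions $f(0) = \diag(c,\ldots,c,0_{n_i})$ and $f(1) = \diag(c,\ldots,c)$ yields a system of multiplicity-matching conditions: for each $s \in (0,1)$, the numbers of single-step eigenvalue functions with $\eta_k^{(i)}(0) = s$ and with $\eta_k^{(i)}(1) = s$ sit in the fixed ratio $a_{i+1} : (a_{i+1}+1)$; the total counts of $\ev_\infty$-contributions and of zero-blocks at each endpoint must fit the target structure; and no $\eta_k^{(i)}$ may satisfy $\eta_k^{(i)}(1) = 0$.

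These combinatorial constraints can be met by taking $n_{i+1}$, $a_{i+1}$, and the number of eigenvalue functions $m$ sufficiently large and arranging them in compatible packets, as in Jiang-Su. The continuous unitary $u \in C([0,1], M_{n_{i+1}'})$ is then obtained by interpolating between the block-permuting unitaries needed at the two endpoints, which is possible because the unitary group of a matrix algebra is path-connected. I expect the most delicate step to be the multiplicity bookkeeping required to satisfy these endpoint matching conditions while preserving the $\frac{1}{2}$-Lipschitz and covering properties; this is where the nontrivial endpoint multiplicity $a_i$ of the building blocks $A(n,n')$ distinguishes the construction from that of \cite[Proposition 2.5]{Jiang:1999hb}.
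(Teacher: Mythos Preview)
Your proposal is correct and follows essentially the same approach as the paper's proof: single-step diagonal connecting maps with $\tfrac{1}{2}$-Lipschitz eigenvalue functions covering $[0,1]$, endpoint multiplicity matching, and a unitary path between the two endpoint permutations. The paper carries out the bookkeeping you flag as the delicate step with the explicit choices $b=2a+1$, $n_2=bn_1$, $m=2b$, and eigenvalue maps drawn from $\{x/2,\ 1/2,\ (x+1)/2\}$ (namely $b$ copies of $x/2$, one copy of $1/2$, and $b-1$ copies of $(x+1)/2$).
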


\begin{proof}
Let $A_1$ be some building block $A(n_1, (a+1)n_1)$. We will find a building block $A_2 = A(n_2, (b+1)n_2)$ and an injective $^*$-homomorphism $\varphi:A_1 \rightarrow A_2$ of the form (\ref{diag}) where each $\xi_k$ is one of the maps
\begin{equation} \label{list}
\xi(x) = x/2,  \qquad \xi(x) \equiv 1/2 \quad \text{or}  \quad \xi(x) = (x+1)/2.
\end{equation}
Repeating this process then gives an inductive sequence where each $\varphi_i: A_i \rightarrow A_{i+1}$ has the right form. Note also that as defined, $\varphi_i(f)$ makes sense for any $f\in C([0,1], M_{n_i'})$, so $\varphi_i$ extends to a unital $^*$-homomorphism $C([0,1], M_{n_i'}) \rightarrow C([0,1], M_{n_{i+1}'})$ and in particular $\varphi_i(u)$ is unitary whenever $u$ is. Therefore, each connecting map $\varphi_{ij}: A_i \rightarrow A_j$ will be of the form (\ref{diag}) with each $\xi_k$ a composition of $j-i$ functions from the list (\ref{list}), so will be one of the maps
\[
\xi(x) = \frac{l}{2^{j-i}} \quad \text{or} \quad \xi(x) = \frac{x+l}{2^{j-i}},
\]
for some integer $l$ with $0< l < 2^{j-i}$ in the former case and $0 \le l < 2^{j-i}$ in the latter. Hence (\ref{small}) is satisfied.

Let $b = 2a+1$, $n_2 = bn_1$ and $m=2b$. Let $f\in A_1$, so that $f(0) = \diag(\overbrace{c, \cdots, c}^a, 0_{n_1})$ and $f(1) = \diag(\overbrace{c, \cdots, c}^{a+1})$ (in $M_{(a+1)n_1}$) for some $c\in M_{n_1}$. Write $d_f:=\diag(f(1/2), \overbrace{c, \cdots, c}^a) \in M_{n_2}$. Then, in $M_{(b+1)n_2}$, the matrix
\[
d_f\otimes 1_b = \left(
\begin{array}{cccc}
d_f  &  &  & \\
 & \ddots &  & \\
 &  &  d_f &\\
 &  &  & 0_{n_2}\\
\end{array}
\right)
\]
consists (up to permutation) of $ab$ copies of $c$, $b$ copies of $f(1/2)$, and a zero matrix of size $n_2$. On the other hand, the matrix
\[
\diag(\overbrace{f(0),\ldots,f(0)}^{b}, \overbrace{f(1/2),\ldots,f(1/2)}^b) \in M_{m(a+1)n_1} = M_{(b+1)n_2}
\]
also consists of $ab$ copies of $c$, $b$ copies of $f(1/2)$, and a zero matrix of size $bn_1=n_2$. Therefore, there is a permutation unitary $u_0\in M_{(b+1)n_2}$ such that
\[
\left(
\begin{array}{cccc}
d_f  &  &  & \\
 & \ddots &  & \\
 &  &  d_f &\\
 &  &  & 0_{n_2}\\
\end{array}
\right) =
u_0\left(
\begin{array}{cccccc}
f(0) &  & & & & \\
 & \ddots &  &  &  & \\
 &  &  f(0) &  & & \\
 &  &  & f(1/2) & & \\
 &  &  &  & \ddots & \\
 &  &  &  &  & f(1/2)\\
\end{array}
\right)u_0^*.
\]
Similarly, again in $M_{(b+1)n_2}$, both of the matrices $\diag(\overbrace{f(1/2),\ldots,f(1/2)}^{b+1}, \overbrace{f(1),\ldots,f(1)}^{b-1})$ and $d_f\otimes 1_{b+1}$ consist up to permutation of $a(b+1)$ copies of $c$ and $b+1$ copies of $f(1/2)$. Therefore, there is a permutation unitary $u_1\in M_{(b+1)n_2}$ such that
\[
\left(
\begin{array}{cccc}
d_f  &  &  & \\
 & d_f &  & \\
 &  &  \ddots &\\
 &  &  & d_f \\
\end{array}
\right) =
u_1\left(
\begin{array}{cccccc}
f(1/2) &  & & & & \\
 & \ddots &  &  &  & \\
 &  &  f(1/2) &  & & \\
 &  &  & f(1) & & \\
 &  &  &  & \ddots & \\
 &  &  &  &  & f(1)\\
\end{array}
\right)u_1^*.
\]
Now we just connect the endpoints: take $u$ to be any continuous path of unitaries in $M_{(b+1)n_2}$ from $u_0$ to $u_1$, and define functions $\xi_1,\ldots,\xi_m:[0,1] \rightarrow [0,1]$ by
\[
\xi_k(x) = \left\{
\begin{array}{lll}
x/2 & \text{if} \quad 1\le k \le b,\\
1/2 & \text{if} \quad k=b+1,\\
(x+1)/2 & \text{if} \quad b+1<k\le m.
\end{array}
\right.
\]
Then the map $\varphi$ as defined in (\ref{diag}) is by construction a $^*$-homomorphism from $A_1$ to $A_2$.

Finally, note that $\varphi_{ij}$ is injective if and only if condition (\ref{small3}) holds (since $\varphi_{ij}(f) = 0$ if and only if $f\in A_i$ is supported on the open set $[0,1]\backslash \bigcup_{k=1}^m \xi_k [0,1]$). By construction, (\ref{small3}) holds for the $\xi_k$ used to define $\varphi$ above, and it therefore follows that for every $i$ and $j$, $\varphi_{ij}$ is injective and satisfies (\ref{small3}).
\end{proof}

\begin{remark} \label{growth}
For the above inductive sequence we have $a_i \rightarrow \infty$ (and also $n_i/a_i =n_{i-1} \rightarrow \infty$) as $i \rightarrow \infty$; we will make use of this in section \ref{endomorphisms of W}.
\end{remark}

\begin{lemma} \label{bounded}
If $(A_i, \varphi_i)$ is any inductive sequence as in Proposition \ref{sequence} then the connecting maps $\varphi_{ij}$ are nondegenerate. In particular, $\varphi_{ij}^*(T^+_1(A_j)) \subseteq T^+_1(A_i)$ for every $j\ge i$. Moreover, if $A = \varinjlim (A_i, \varphi_i)$ then every $\tau \in T^+A$ is bounded.
\end{lemma}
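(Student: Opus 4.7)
The plan divides into three tasks: nondegeneracy of each $\varphi_{ij}$, the pullback statement for tracial states, and boundedness of arbitrary elements of $T^+A$. For nondegeneracy, it suffices by composition to handle a single step $\varphi = \varphi_k : A_k \to A_{k+1}$ from Proposition~\ref{sequence}. Let $(e_\lambda) \subset A_k$ be an approximate unit, and write $e_\lambda(0) = \diag(c_\lambda, \ldots, c_\lambda, 0_{n_k})$ with $c_\lambda \to 1_{n_k}$. For $t \in (0,1)$ every $\xi_\ell(t)$ lies in $(0,1)$, where approximate units of $A_k$ converge to $1_{n_k'}$, so $\varphi(e_\lambda)(t) \to 1_{n_{k+1}'}$. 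At $t=0$ the block-rearrangement identity from the proof of Proposition~\ref{sequence} gives $\varphi(e_\lambda)(0) = d_\lambda \otimes 1_{b_k} = \diag(d_\lambda, \ldots, d_\lambda, 0_{n_{k+1}})$ with $d_\lambda := \diag(e_\lambda(1/2), c_\lambda, \ldots, c_\lambda)$; since $e_\lambda(1/2) \to 1_{n_k'}$ and $c_\lambda \to 1_{n_k}$, this tends to $\diag(1_{n_{k+1}}, \ldots, 1_{n_{k+1}}, 0_{n_{k+1}})$, the correct boundary form for an approximate unit of $A_{k+1}$. The computation at $t=1$ is analogous. Choosing a suitably regular approximate unit and using uniform continuity of $a \in A_{k+1}$ near the endpoints, pointwise convergence upgrades to $\|\varphi(e_\lambda) a - a\| \to 0$, proving nondegeneracy.

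Given nondegeneracy, the trace-pullback statement is routine: for $\tau \in T^+_1 A_j$ and any approximate unit $(e_\lambda^{(i)})$ of $A_i$, the net $(\varphi_{ij}(e_\lambda^{(i)}))$ is an approximate unit of $A_j$, so $\|\tau \circ \varphi_{ij}\| = \lim_\lambda \tau(\varphi_{ij}(e_\lambda^{(i)})) = \|\tau\| = 1$, whence $\tau \circ \varphi_{ij} \in T^+_1 A_i$.

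For boundedness, fix $\tau \in T^+A$ and set $\tau_i := \tau \circ \varphi_{i\infty}$, an l.s.c.\ trace on $A_i$ a priori taking values in $[0,\infty]$. The strategy is: (a) show $\tau_i$ is densely defined, whereupon $\tau_i \in T^+A_i$ is bounded by the structural fact from Section~\ref{building blocks}; (b) propagate norms exactly as in the previous paragraph --- $\tau_i = \tau_j \circ \varphi_{ij}$ with nondegenerate $\varphi_{ij}$ forces $\|\tau_i\| = \|\tau_j\|$, so the common value $C$ is finite and independent of $i$; and (c) iterate nondegeneracy to obtain a diagonal approximate unit $(\varphi_{i\infty}(e_\lambda^{(i)}))$ of $A$, yielding $\|\tau\| = \sup_i \|\tau_i\| = C < \infty$. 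The main obstacle is step (a): since $\varphi_{i\infty}(A_i)$ is not hereditary in $A$, one cannot simply restrict the dense ideal of finiteness of $\tau$, and some care is needed to produce enough elements of $A_{i,+}$ on which $\tau_i$ is finite, likely via inductive-limit approximation combined with a hereditary-subalgebra argument inside $A$ for some $b \in A_+$ with $\tau(b) < \infty$.
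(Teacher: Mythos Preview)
Your overall outline matches the paper's, and the trace-pullback paragraph and steps (b)--(c) of the boundedness argument are essentially identical to what the paper does. Two points deserve comment.

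For nondegeneracy, the paper avoids your somewhat delicate ``pointwise-to-uniform'' upgrade by working with a single concrete element: it takes the canonical strictly positive $h \in A_1$ given by $h(t) = (1 \oplus \cdots \oplus 1 \oplus t) \otimes 1_{n_1}$, sets $h_i := \varphi_{1i}(h)$, and proves directly that $(h_i^{1/n})_{n\in\mathbb{N}}$ is an approximate unit for each $A_i$. This makes the endpoint analysis explicit (splitting $[0,1]$ into a short interval near $0$, where one uses the boundary block structure and that the $\xi_k$ are increasing, and the rest, where $h_i^{1/n}$ converges uniformly to the identity) and removes the need to specify what ``suitably regular approximate unit'' means. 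Your approach would work once you make such a choice --- but the natural choice is precisely $h^{1/n}$, at which point you are doing the paper's argument.

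For step (a) of boundedness, the gap you flag is real, and your proposed hereditary-subalgebra workaround is more complicated than necessary. The paper sidesteps the issue entirely by invoking the standard identification $T^+A \cong \varprojlim(T^+A_i, \varphi_i^*)$: an element $\tau \in T^+A$ then \emph{is} a compatible family of traces $\tau_i \in T^+A_i$, and since every densely defined l.s.c.\ trace on a building block is automatically bounded (as recorded in Section~\ref{building blocks}), each $\tau_i$ is bounded with no further work. Your steps (b) and (c) --- using nondegeneracy and the common strictly positive element $h$ to show $\|\tau_i\| = \|\tau_1\|$ for all $i$, hence $\|\tau\| = \|\tau_1\| < \infty$ --- then finish exactly as the paper does. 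So rather than trying to manufacture elements of $A_{i,+}$ on which $\tau_i$ is finite, you should simply cite the continuity of the functor $T^+$ under inductive limits.
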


\begin{proof}
Here, `nondegenerate' means that an, and hence every, approximate unit of $A_i$ is mapped to an approximate unit of $A_j$. Let $h\in A_1$ be the canonical self-adjoint element $h(t) = (\overbrace{1\oplus \cdots \oplus 1}^a \oplus t) \otimes 1_{n_1}$. We claim that for every $i \ge 1$, $h_i : =\varphi_{1i}(h)$ is strictly positive in $A_i$, which is equivalent to saying that $(h_i^{1/n})_{n\in \mathbb{N}}$ is an approximate unit for $A_i$. To make the analysis easier, we may assume by induction that $i=2$. To prove the Lemma, let $f \in A_i$ and let $\epsilon>0$, and for convenience write $p=1_{n_i'-n_i}$ and $q=1_{n_i'}$. Certainly we have $\|h_i\|=1$, and we may assume that $\|f\|=1$ as well. Choose $\delta>0$ such that if $0\le t < \delta$ then $\|f(t)-f(0)\| < \epsilon/5$ and $\|u_t-u_0\| < \epsilon/5$ (where $u$ is as in (\ref{diag}) of Proposition \ref{sequence}). It is easy to see that as $n\rightarrow \infty$, $h_i^{1/n}(0)$ converges to $p$ and $h_i^{1/n}$ converges locally uniformly to $q$ on $(0,1]$. Hence we can find some $N$ such that $\|h_i^{1/n}(0)-p\| < \epsilon/5$ and $\|h_i^{1/n}(t)-q\| < \epsilon$ for every $\delta \le t \le 1$ and $n\ge N$. For $\delta \le t \le 1$ we therefore have
\[
\|h_i^{1/n}(t)f(t)-f(t)\| = \|(h_i^{1/n}(t)-q)f(t)\| < \epsilon \qquad \forall n\ge N.
\]
Now let  $0 \le t < \delta$ and write $g_n(t)= u_0u_t^*h_i^{1/n}(t)u_tu_0^*$. Then $g_n(t)$ commutes with $p$, and since the $\xi_k$ are increasing, we have $\|pg_n(t)p - p\| \le \|h_i^{1/n}(0) - p\| < \epsilon/5$ for $n\ge N$. Thus
\begin{align*}
\|h_i^{1/n}(t)f(t)-f(t)\| &\le \|h_i^{1/n}(t)f(t) - h_i^{1/n}(t)f(0)\| + \|h_i^{1/n}(t)f(0) - g_n(t)f(0)\|\\
&+ \|g_n(t)f(0) - pg_n(t)pf(0)\| + \|(pg_n(t)p - p)f(0)\| + \|f(0)-f(t)\|\\
&< (\epsilon + 2\epsilon + 0 + \epsilon + \epsilon)/5\\
&= \epsilon
\end{align*}
for $0 \le t < \delta$ and $n\ge N$. Therefore, $\|h_i^{1/n}f - f\| < \epsilon$ for every $n\ge N$ and hence $(h_i^{1/n})_{n\in \mathbb{N}}$ is an approximate unit for $A_i$. It follows that the connecting maps $\varphi_{ij}$ are nondegenerate.

Now suppose that $j\ge i$ and let $\rho$ be a state on $A_j$. Then $\rho\circ \varphi_{ij}$ is a positive linear functional on $A_i$ and moreover
\[
\|\rho\circ\varphi_{ij}\| = \lim_{n\rightarrow \infty} \rho\circ\varphi_{ij} (h_i^{1/n}) = \lim_{n\rightarrow \infty} \rho(\varphi_{ij}(h_i)^{1/n}) = \lim_{n\rightarrow \infty} \rho(h_j^{1/n}) = \|\rho\| = 1,
\]
so $\varphi_{ij}^*\rho = \rho\circ\varphi_{ij} \in S(A_i)$.\\

Finally, let $\tau \in T^+A$, which we identify with $\varprojlim(T^+A_i, \varphi_i^*)$. Then for every $i$, $\tau$ restricts to a bounded trace $\varphi_{i\infty}^*\tau$ on $A_i$, and we have
\[
\|\varphi_{i\infty}^*\tau\| = \lim_{n\rightarrow \infty} (\varphi_{i\infty}^*\tau) (h_i ^{1/n}) = \lim_{n\rightarrow \infty} \tau (\varphi_{i \infty}(h_i) ^{1/n}) = \lim_{n\rightarrow \infty} \tau (\varphi_{1 \infty}(h) ^{1/n}) = \|\varphi_{1\infty}^*\tau\|.
\]
Hence $\tau$ is bounded.
\end{proof}

To show that $A$ is simple, we use the following well-known lemma (see \cite[\S4]{Elliott:1997yu} for a unital version).

\begin{lemma} \label{delta}
Let $A = \varinjlim (A_i, \varphi_{ij})$ be an inductive limit of building blocks. Then $A$ is simple if and only if for every $i \in \mathbb{N}$ and every nonzero element  $a$ of $A_i$, the image $\varphi_{ij}(a)$ generates $A_j$ as a closed two-sided ideal for all but finitely many $j\ge i$.
\end{lemma}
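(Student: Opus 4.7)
The plan is to prove each direction separately. The reverse implication is a standard fact about ideals in $C^*$-inductive limits; the forward implication is the substantive one.

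\emph{Sufficiency.} Suppose the ideal condition holds and let $J\subseteq A$ be a nonzero closed two-sided ideal. Since connecting maps may be taken injective (section \ref{building blocks}), each preimage $J_i := \varphi_{i\infty}^{-1}(J)$ is a closed ideal of $A_i$. To see that $J_i \neq 0$ for some $i$, I would pick a nonzero positive $y\in J$, approximate it to within $\epsilon<\|y\|/2$ by some positive $\varphi_{i\infty}(a)$, and observe that $(a-\epsilon)_+$ is nonzero in $A_i$ while $\varphi_{i\infty}((a-\epsilon)_+) = (\varphi_{i\infty}(a)-\epsilon)_+$ lies in $J$ (by norm-closeness to $(y-\epsilon)_+\in J$ inside the hereditary subalgebra $\overline{yAy}$). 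Now pick a nonzero $a\in J_i$. The hypothesis says that for all sufficiently large $j$, every $b\in A_j$ is a norm-limit of finite sums $\sum_k x_k\, \varphi_{ij}(a)\, y_k$ with $x_k,y_k\in A_j$; applying $\varphi_{j\infty}$ and using $\varphi_{i\infty}(a)\in J$, we get $\varphi_{j\infty}(A_j)\subseteq J$, so $J=A$ by density.

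\emph{Necessity.} Assume $A$ is simple and suppose, for contradiction, that there exist $0\neq a\in A_i$ and an infinite set $S\subseteq \{j\geq i\}$ such that for each $j\in S$ the closed two-sided ideal $I_j\subseteq A_j$ generated by $\varphi_{ij}(a)$ is proper. For $j\leq l$ in $S$, $\varphi_{jl}(I_j)\subseteq I_l$ because $\varphi_{jl}(\varphi_{ij}(a)) = \varphi_{il}(a)\in I_l$; hence
\[
J := \overline{\bigcup_{j\in S} \varphi_{j\infty}(I_j)}
\]
is a closed two-sided ideal of $A$, which is nonzero since it contains $\varphi_{i\infty}(a)\neq 0$ (by injectivity of $\varphi_{i\infty}$).

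The main obstacle is to show $J\neq A$, and I would handle this with a quotient-norm calculation. The induced maps $A_j/I_j \to A_l/I_l$ form an inductive system whose limit is canonically isomorphic to $A/J$, and the norm in $A/J$ of any element is the decreasing limit of its quotient norms in $A_l/I_l$. Take the canonical strictly positive $h\in A_1$ from Lemma \ref{bounded} and set $h_l := \varphi_{1l}(h)\in A_l$. Using the block-diagonal form (\ref{diag}) together with $\|h(t)\|=1$ for every $t\in[0,1]$, one checks that $\|\pi(h_l)\|=1$ for every irreducible representation $\pi$ of $A_l$: this is immediate for $\ev_x$ with $x\in(0,1)$, and for $\ev_\infty$ it follows because $h_l(0)$ consists of $a_l$ identical copies of $\ev_\infty(h_l)$ together with a zero block, so $\|\ev_\infty(h_l)\| = \|h_l(0)\| = 1$. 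Since any proper ideal of $A_l$ is contained in the kernel of some irreducible representation, $\|h_l+I_l\|=1$ for every $l\in S$; passing to the limit, $\varphi_{1\infty}(h)+J$ has norm $1$ in $A/J$, so $A/J\neq 0$ and $J$ is proper. This contradicts simplicity of $A$.
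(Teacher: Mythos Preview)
The paper does not supply its own proof of this lemma; it is quoted as well-known with a reference to the unital version in \cite[\S4]{Elliott:1997yu}. So there is nothing to compare against, and I assess your argument directly.

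Your sufficiency argument is essentially correct. The one imprecision is the phrase ``by norm-closeness to $(y-\epsilon)_+\in J$ inside $\overline{yAy}$'': closeness to an element of $J$ does not by itself give membership in $J$. What you want is the standard cut-down fact (R{\o}rdam's lemma) that $\|x-y\|<\epsilon$ with $y\ge 0$ implies $(x-\epsilon)_+\in\overline{yAy}$; then $(\varphi_{i\infty}(a)-\epsilon)_+\in\overline{yAy}\subseteq J$ directly.

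Your necessity argument has a genuine gap. You invoke the block-diagonal form (\ref{diag}) to conclude $\|\pi(h_l)\|=1$ for every irreducible $\pi$ of $A_l$, but (\ref{diag}) is established only in Proposition~\ref{sequence} for the particular connecting maps used to construct $W$; Lemma~\ref{delta} is stated for \emph{arbitrary} inductive limits of building blocks. For a general $*$-homomorphism $\varphi_{1l}$ between building blocks, each $\pi\circ\varphi_{1l}$ decomposes as a direct sum of irreducibles of $A_1$ together with a possible zero summand, and one obtains $\|\pi(h_l)\|=1$ only when $\pi\circ\varphi_{1l}\neq 0$. Nothing in the hypotheses excludes $\varphi_{1l}(A_1)\subseteq\ker\pi$ for some irreducible $\pi$ of $A_l$; indeed, ruling this out is the same as asserting that $\varphi_{1l}(h)$ is full in $A_l$, which is precisely an instance of the conclusion you are trying to prove. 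So the quotient-norm computation is, as written, circular.

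To repair the argument one has to use more of the structure of building blocks: either the compactness of $\mathrm{Prim}(A_j)$ (so that the increasing family $\varphi_{jl}^{-1}(I_l)$, whose closure is $A_j$ by simplicity, actually stabilises at $A_j$ for some $l$, and then analyse what that forces), or the standard form of $*$-homomorphisms between building blocks developed in \cite{Razak:2002kq}. Either route requires a further step that your present write-up does not supply.
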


\begin{proposition} \label{simple}
Let $A = \varinjlim (A_i, \varphi_i)$ for any inductive sequence $(A_i, \varphi_i)$ as in Proposition \ref{sequence}. Then $A$ is simple and  has a unique tracial state.
\end{proposition}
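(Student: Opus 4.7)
The plan is to verify the ideal-generation criterion of Lemma \ref{delta} for simplicity and to exploit the fact that $\varphi_{ij}^*$ contracts the tracial state space $T^+_1(A_j)$ for uniqueness. For simplicity, let $a \in A_i$ be nonzero. By continuity, $V := \{t \in [0,1] : a(t) \neq 0\}$ is a nonempty open subset of $[0,1]$, and since neither $\{0\}$ nor $\{1\}$ is open in $[0,1]$, $V \cap (0,1)$ contains an open interval $(\alpha, \beta)$. Pick $j$ with $(1/2)^{j-i} < (\beta-\alpha)/2$. By (\ref{small3}) the midpoint $m := (\alpha+\beta)/2$ lies in $\xi_k([0,1])$ for some $k$, and (\ref{small}) forces this set to have diameter at most $(1/2)^{j-i}$; hence $\xi_k([0,1]) \subseteq (\alpha,\beta) \subseteq V$, so $a(\xi_k(x)) \neq 0$ for every $x \in [0,1]$. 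The diagonal form (\ref{diag}) of $\varphi_{ij}$ then implies $\varphi_{ij}(a)(x) \neq 0$ for every $x \in [0,1]$, so via the correspondence between ideals of a building block and closed subsets of $\mathbb{T}$, the image $\varphi_{ij}(a)$ generates $A_j$ as a closed two-sided ideal. Lemma \ref{delta} yields simplicity.

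For uniqueness, recall that every $\tau_\mu \in T^+_1(A_j)$ has the form $\tau_\mu(g) = \int \tr(g(t))\,d\mu(t)$ for a probability measure $\mu$ on $(0,1]$. From (\ref{diag}), using that $n_j' = m \cdot n_i'$ and that the normalized trace averages across diagonal blocks,
\[
\varphi_{ij}^*\tau_\mu(f) = \frac{1}{m}\sum_{k=1}^{m}\int (\tr\circ f)(\xi_k(x))\,d\mu(x) \qquad (f \in A_i).
\]
Fix $f \in A_i$, let $g := \tr\circ f : [0,1] \to \mathbb{R}$ and denote its modulus of continuity by $\omega_g$. By (\ref{small}), each $g\circ\xi_k$ has oscillation at most $\omega_g((1/2)^{j-i})$ on $[0,1]$. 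Since $\mu-\nu$ has total mass zero for any two probability measures $\mu, \nu$ on $(0,1]$, any additive constant in $g\circ\xi_k$ drops out of the difference of integrals, so
\[
|\varphi_{ij}^*\tau_\mu(f) - \varphi_{ij}^*\tau_\nu(f)| \le \omega_g((1/2)^{j-i})\cdot\|\mu - \nu\| \le 2\,\omega_g((1/2)^{j-i}).
\]

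Given two tracial states $\tau, \sigma$ on $A$, Lemma \ref{bounded} ensures that their restrictions to $A_j$ are tracial states, and both $\tau|_{A_i}$ and $\sigma|_{A_i}$ arise as pullbacks by $\varphi_{ij}^*$; letting $j \to \infty$ in the estimate above forces $\tau|_{A_i} = \sigma|_{A_i}$ for every $i$, and density of $\bigcup_i \varphi_{i\infty}(A_i)$ in $A$ gives $\tau = \sigma$. Existence of a tracial state is automatic since $T^+_1(A)$ is naturally identified with the inverse limit of the nonempty compact Choquet simplices $T^+_1(A_j)$ under the continuous connecting maps supplied by Lemma \ref{bounded}. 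I expect the main obstacle to be confirming the trace-averaging formula carefully from the block structure of (\ref{diag}); once that is in hand, (\ref{small}) does all the analytic work while (\ref{small3}) drives the combinatorial ideal argument.
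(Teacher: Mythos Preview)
Your proof is correct and follows essentially the same approach as the paper. The simplicity argument is identical (find an interval on which $a$ is nonzero, use (\ref{small}) and (\ref{small3}) to trap some $\xi_k([0,1])$ inside it, invoke Lemma \ref{delta}), and the uniqueness argument is the same idea phrased slightly differently: the paper compares an arbitrary $\tau_\mu \in T^+_1(A_j)$ to a point mass $\tr\otimes\delta_y$ and then uses the triangle inequality, whereas you compare two arbitrary $\tau_\mu,\tau_\nu$ directly via the total-variation bound, but in both cases the content is that each $g\circ\xi_k$ has oscillation at most $\omega_g((1/2)^{j-i})$ by (\ref{small}); your trace-averaging formula is exactly what underlies the paper's estimate (\ref{6}), and the existence argument via the inverse limit of compact simplices is the same as the paper's.
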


\begin{proof}
If $f$ is a nonzero element of $A_i$ then there is an interval $U\subset [0,1]$ on which $f$ is nonzero. By (\ref{small}) and (\ref{small3}) of Proposition \ref{sequence}, if $j\ge i$ is large enough then there is some $1\le k \le m$ such that $\xi_k([0,1]) \subset U$. Then $f \circ \xi_k$, and hence $\varphi_{ij}(f)$, is nonzero on all of $[0,1]$, and so $\varphi_{ij}(f)$ generates $A_j$ as a closed two-sided ideal. Lemma \ref{delta} therefore implies that $A$ is simple.

Next, note that $A$ has a nonzero trace: the cone $T^+A$ has a compact base $\Sigma$ which can be written as the inverse limit of bases $\Sigma_i$ of $T^+A_i$; since these are nonempty compact Hausdorff spaces it follows that $\Sigma$ is nonempty (and does not contain zero since it is a base). By Lemma \ref{bounded}, this trace is bounded and we now show that it is unique. Let $f\in A_i$ and $\epsilon>0$ be given and choose $\delta>0$ such that $\|f(y) - f(z)\| \le \epsilon/2$ whenever $|y-z|\le \delta$. Then provided $2^{j-i} > 1/\delta$, it follows from (\ref{small}) that $\|f(\xi_k(x)) - f(\xi_k(y))\| \le \epsilon/2$ for every $x,y \in [0,1]$ and $1\le k \le m$. It follows that for all sufficiently large $j$, every $\tau = \tr \otimes \mu \in T^+_1A_j$ and for fixed $y\in (0,1]$, we have
\begin{equation} \label{6}
|\tau(\varphi_{ij}(f)) - \tr \otimes \delta_y(\varphi_{ij}(f))|  = \left |\int \tr(\varphi_{ij}(f)(x) - \varphi_{ij}(f)(y)) d\mu(x) \right |    \le \epsilon/2
\end{equation}
and so
\[
|\tau_{j,1}(\varphi_{ij}(f)) - \tau_{j,2}(\varphi_{ij}(f))| \le \epsilon \quad \text{for every} \quad \tau_{j,1}, \tau_{j,2} \in T^+_1A_j.
\]
Hence $A$ has at most one tracial state.
\end{proof}

As in the introduction, we will denote the unique such inductive limit by $W$ and its unique tracial state by $\tau$.

\section{A categorical description of $W$} \label{endomorphisms of W}

In this section we characterize $(W, \tau)$ as a terminal object (Theorem \ref{terminal}), and use this description to prove that every trace-preserving endomorphism of $W$ is approximately inner (Corollary \ref{inner}), and that every simple Razak algebra embeds into $W \otimes \mathcal{K}$ (Corollary \ref{embedding}). We accomplish this via the following adaptation of \cite[Theorem 2.1]{Rordam:2004kq}.

\begin{lemma} \label{tracial embedding}
Let $B$ be a building block and let $\tau$ be the unique tracial state on $W$.
\begin{enumerate}[(i)]
\item For every faithful trace $\tau_0$ on $B$ with $\|\tau_0\|\le1$ there exists a $^*$-homomorphism $\psi : B \rightarrow W$ such that $\tau \circ \psi = \tau_0$.
\item Two $^*$-homomorphisms $\psi_1, \psi_2 : B \rightarrow W$ are approximately unitarily equivalent if and only if $\tau\circ\psi_1 = \tau\circ\psi_2$.
\end{enumerate}
\end{lemma}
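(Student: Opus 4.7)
The plan is to adapt R\o rdam's argument in \cite[Theorem 2.1]{Rordam:2004kq} to this stably projectionless setting, relying on the existence and uniqueness theorems that underlie Razak's classification \cite{Razak:2002kq}. What I need, in the form suitable for a one-sided Elliott intertwining, are two facts about *-homomorphisms between building blocks: an \emph{existence} statement (E), that given building blocks $C, D$ and faithful traces $\sigma_C \in T^+C$, $\sigma_D \in T^+D$ with $\|\sigma_C\| \le \|\sigma_D\|$, there is a *-homomorphism $\varphi : C \to D$ with $\varphi^*\sigma_D = \sigma_C$ (or at least $\sigma_C$ up to any prescribed tolerance on a finite set); and a \emph{uniqueness} statement (U), that two such morphisms whose induced traces on $C$ agree are approximately unitarily equivalent via unitaries in the multiplier algebra of $D$, in a quantitative form that allows a small tracial error.

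For part (i), set $\tau_i := \varphi_{i\infty}^*\tau \in T^+_1 A_i$, which is a faithful state by Lemma \ref{bounded}, and fix a dense sequence $(b_n)$ in $B$. Iteratively applying (E), choose indices $i_1 < i_2 < \cdots$ and *-homomorphisms $\psi_k : B \to A_{i_k}$ with $\psi_k^* \tau_{i_k} = \tau_0$. At each step, both $\varphi_{i_k, i_{k+1}} \circ \psi_k$ and $\psi_{k+1}$ are morphisms $B \to A_{i_{k+1}}$ with the same induced trace $\tau_0$ on $B$, so (U) supplies a unitary $u_k$ in the multiplier algebra of $A_{i_{k+1}}$ with $\|\ad(u_k)(\varphi_{i_k, i_{k+1}} \circ \psi_k(b_j)) - \psi_{k+1}(b_j)\| < 2^{-k}$ for $j\le k$. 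Absorbing the unitaries into the $\psi_{k+1}$ and passing to the limit produces a *-homomorphism $\psi : B \to W$; since $\tau \circ \varphi_{i_k \infty} \circ \psi_k = \tau_0$ for every $k$ and these approximate $\tau \circ \psi$ on a dense subset of $B$, we conclude $\tau \circ \psi = \tau_0$.

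For part (ii), the direction ``approximately unitarily equivalent $\Rightarrow$ equal induced trace'' is immediate from continuity of $\tau$ and its invariance under unitary conjugation. For the converse, suppose $\tau\circ\psi_1 = \tau\circ\psi_2$. Given a finite $F \subset B$ and $\varepsilon > 0$, use semiprojectivity of $B$ as a one-dimensional NCCW complex (\cite{Eilers:1998yu}) to approximately lift each $\psi_\ell$ to a *-homomorphism $\tilde\psi_\ell : B \to A_j$ satisfying $\|\varphi_{j\infty} \tilde\psi_\ell(b) - \psi_\ell(b)\| < \varepsilon/3$ for $b \in F$ and $\ell=1,2$, provided $j$ is taken large enough. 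Then $\tilde\psi_1^* \tau_j$ and $\tilde\psi_2^* \tau_j$ agree on $F$ up to a small error, and the stability version of (U) yields a multiplier unitary $u$ of $A_j$ with $\|u \tilde\psi_1(b) u^* - \tilde\psi_2(b)\| < \varepsilon/3$ for $b \in F$; pushing $u$ forward through $\varphi_{j\infty}$ into the multiplier algebra of $W$ and combining the three errors assembles the required approximate unitary equivalence.

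The main obstacle will be extracting the quantitative, approximate versions of Razak's existence and uniqueness theorems in the precise forms required --- in particular, a uniqueness theorem that tolerates a small tracial error and produces a multiplier unitary effective on a prescribed finite subset of $B$. The non-unital setting makes the tracking of unitaries somewhat delicate (they live in multiplier algebras rather than in the algebras themselves), but the absence of projections, the semiprojectivity of $B$, and the fact that traces form the entire classifying invariant all work in our favour.
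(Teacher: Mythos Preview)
Your outline is essentially the paper's approach: a one-sided intertwining built from Razak's local existence and local uniqueness theorems, with semiprojectivity of $B$ used in part (ii). Two points deserve sharpening. First, the exact form of (E) you invoke (``$\psi_k^*\tau_{i_k}=\tau_0$'') is not available between fixed building blocks; Razak's local existence only produces $\psi_k$ with $|\tau'(\psi_k(f))-\tau_0(f)|<\delta_k$ for $f$ in a finite test set and all $\tau'\in T^+_1A_{i_k}$, and it carries a dimensional hypothesis of the form $n\ge Na/\inf\iota(\xi(\eta))$ on the target building block. Meeting this hypothesis is exactly where the specific construction of $W$ enters: one needs $a_i\to\infty$ and $n_i/a_i\to\infty$ along the chosen inductive sequence (Remark~\ref{growth}), so the choice of $i_k$ is not free. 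Second, Razak's local uniqueness is not a bare ``equal traces $\Rightarrow$ a.u.e.'' statement: it requires that $\varphi^t(h)$ and $\psi^t(h)$ have at least three distinct eigenvalues for every $t$, which is arranged via the $\delta$-density Lemma~\ref{eigenvalues} by pushing further along the sequence. Once you build these constraints into your choice of $i_k$ and track the approximate (rather than exact) tracial matching through the intertwining, your argument coincides with the paper's.
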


To prove this, we need to use Razak's local existence and local uniqueness theorems, which appear as \cite[Theorem 3.1]{Razak:2002kq} and \cite[Theorem 4.1]{Razak:2002kq} respectively. We restate them here for convenience, referring to section \ref{building blocks} for notation.

\begin{proposition}[Local existence] \label{local existence}
Let $B$ be a building block, and fix some finite subset $F \subset \aff_0T^+B$ and some $\epsilon>0$. Then there is a natural number $N$ and some $\eta \in \aff_0T^+B$ with $\|\eta\|_B \le 1$ and $\inf \iota(\eta) \ge 1/2$ such that the following property holds. For any building block $A=A(n,(a+1)n)$ and contractive positive linear map $\xi: (\aff_0T^+B, \|\cdot\|_B) \rightarrow (\aff_0T^+A, \|\cdot\|_A)$, if $n \ge Na/\inf \iota(\xi(\eta))$ then there is a $^*$-homomorphism $\psi: B \rightarrow A$ with $\|\xi(f) - \psi_*(f)\|_{\xi(\eta)} < \epsilon$ for every $f\in F$.
\end{proposition}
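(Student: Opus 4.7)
The plan is to construct $\psi: B \to A$ explicitly by discretizing $\xi$. Write $B = A(m, m(b+1))$. Recall from section \ref{building blocks} that any *-homomorphism $\psi: B \to A$ is determined fibrewise: at each $x \in [0,1]$, $\psi_x : B \to M_{n(a+1)}$ is, up to unitary conjugation, a direct sum of irreducible representations of $B$, namely evaluations $\ev_s$ for $s \in (0,1)$ and the infinity fibre $\ev_\infty$. Thus the construction reduces to choosing, continuously in $x$, a spectral distribution whose trace approximates the functional $\xi(\cdot)(\tr \otimes \delta_x)$ on $F$, together with a continuous unitary conjugation $u \in C([0,1], M_{n(a+1)})$ to assemble the fibres into an element of $A$.

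First I would fix the order unit: take $\eta \in \aff_0 T^+ B$ with $\iota(\eta)(t) = (b+t)/(b+1)$, i.e.\ the affine function satisfying $\iota(\eta)(0) = \tfrac{b}{b+1}\iota(\eta)(1)$. Then $\|\eta\|_B = 1$ and $\inf \iota(\eta) = b/(b+1) \ge 1/2$. By uniform continuity of the finitely many functions $\iota(\xi(f))$ for $f \in F$, together with $\iota(\xi(\eta))$, there exists $\delta > 0$ such that each varies by at most $(\epsilon/4)\inf\iota(\xi(\eta))$ on any subinterval of $[0,1]$ of length $\le \delta$; I would choose $N > 1/\delta$. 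Along the partition $t_k = k/N$, for each $k$ I would approximate the positive functional $f \mapsto \xi(f)(\tr \otimes \delta_{t_k})$ on $\aff_0 T^+ B$ by a discrete measure on $[0,1] \cup \{\infty\}$ whose atomic weights are integer multiples of $1/(n(a+1))$ (one such weight per irreducible representation of $B$ used at the fibre $x = t_k$). The hypothesis $n \ge Na/\inf\iota(\xi(\eta))$ ensures $n(a+1)$ is large enough that this atomic approximation has error at most $(\epsilon/4)\inf\iota(\xi(\eta))$ on $F$ in $\|\cdot\|_A$. Between adjacent partition points I would interpolate the chosen spectral points along continuous paths in $[0,1]$ and collect them into $u$.

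The main obstacle is matching the boundary structure at $x = 0, 1$: the target $A$ forces $\psi(f)(0) = \diag(c,\ldots,c, 0_n)$ with $a$ copies of $c \in M_n$, and $\psi(f)(1) = \diag(c',\ldots,c')$ with $a+1$ copies of $c' \in M_n$. Hence at the endpoints the spectral distribution for $B$ must be replicated $a$ (resp.\ $a+1$) times plus (at $x=0$) a zero block of size $n$; this commits $an$ of the $n(a+1)$ slots at $x=0$ and leaves effectively $n$ ``free'' slots in which to tune the endpoint approximation. The factor $a$ in the hypothesis $n \ge Na/\inf\iota(\xi(\eta))$ is precisely what compensates for this loss of resolution at the boundary, guaranteeing that an integer-multiplicity spectral distribution with denominator $(a+1)n$ can be found whose replicated-$a$ (resp.\ replicated-$(a+1)$) reduction still approximates $\xi(\cdot)(\tr\otimes\delta_0)$ (resp.\ $\xi(\cdot)(\tr\otimes\delta_1)$) to the required tolerance. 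Once $\psi$ is assembled, the bound $\|\xi(f) - \psi_*(f)\|_{\xi(\eta)} < \epsilon$ follows from the layered $\epsilon/4$ budgets above, together with the inequality $\|g\|_{\xi(\eta)} \le \|g\|_A/\inf\iota(\xi(\eta))$ provided by the identification $\iota'_{\xi(\eta)}$.
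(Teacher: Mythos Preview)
The paper does not supply a proof of this proposition: it is quoted as Razak's Theorem~3.1 from \cite{Razak:2002kq} and used thereafter as a black box. There is therefore no argument in the present paper to compare yours against.

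That said, your outline is the correct strategy and is essentially how Razak's original proof runs. A morphism $\psi:B\to A$ is determined, up to a continuous unitary, by eigenvalue functions $[0,1]\to[0,1]$ together with the multiplicity of $\ev_\infty$ at each fibre; the induced $\psi_*$ on $\aff_0T^+$ is then an average over this spectral data, and approximating a prescribed $\xi$ amounts to discretizing the functionals $\xi(\cdot)(\tr\otimes\delta_x)$ by atomic measures with weights in $\tfrac{1}{n(a+1)}\mathbb{Z}$. Your choice of $\eta$ is fine, and you correctly isolate the boundary constraint as the crux: the same $\pi:B\to M_n$ must serve at both endpoints, and the compatibility $\iota(\xi(f))(0)=\tfrac{a}{a+1}\iota(\xi(f))(1)$ (forced because $\xi$ lands in $C[0,1]_a$) is exactly what makes this consistent.

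Where the sketch is thin is the interpolation step. You cannot simply slide spectral points along arbitrary continuous paths between partition nodes: the multiplicity pattern (how many copies of $\ev_\infty$ versus interior evaluations $\ev_s$, each contributing different dimensions $m$ and $m(b+1)$) may have to change from one node to the next, and doing this continuously requires using the relation $\ev_0=\bigoplus^b\ev_\infty$ to split or merge paths at the endpoints of the $B$-spectrum. Likewise, the unitary $u\in C([0,1],M_{n(a+1)})$ has to be chosen explicitly, since fibrewise the diagonal form is only unique up to conjugation and a careless choice will not glue. These bookkeeping issues are where the real work in \cite[Theorem~3.1]{Razak:2002kq} lies, and your proposal gestures at them without resolving them.
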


\begin{proposition}[Local uniqueness] \label{local uniqueness}
Let $B$ be a building block and let $h$ be the canonical self-adjoint element of $B$ (as in Lemma \ref{bounded}). Fix a finite subset $F \subset B$ and a tolerance $\epsilon >0$. Then there exists a natural number $M$ and two families of positive functions $\{\zeta_j\}_{j=1}^M, \{\sigma_j\}_{j=1}^M$ in the unit ball of $(\aff_0 T^+B,\|\cdot\|_B)$ such that for any building block $A$ and any two $^*$-homomorphisms $\varphi, \psi : B \rightarrow A$ that satisfy
\begin{enumerate}[(i)]
\item $\varphi_*(\zeta_j)(\tau) >m$, $\psi_*(\zeta_j)(\tau) >m$, and $|\varphi_*(\sigma_j)(\tau) - \psi_*(\sigma_j)(\tau)| < m$ for some $m>0$ and every $\tau \in T^+_1A$ and $1 \le j \le M$;
\item $\varphi^t(h)$ and $\psi^t(h)$ have at least three distinct eigenvalues for every $t\in [0,1]$;
\end{enumerate}
there exists a unitary $u \in \widetilde A$ such that $\|\varphi(f) - u \psi(f) u^* \| < \epsilon$ for every $f \in F$.
\end{proposition}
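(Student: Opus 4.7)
The plan is to reduce the problem to pointwise spectral data. Any *-homomorphism $\varphi: B \to A$ between building blocks is determined by its family of fibre maps $\varphi^t: B \to M_{n_A'}$ together with the prescribed boundary behaviour. Each $\varphi^t$ is a representation of $B$, hence unitarily equivalent to a direct sum of the irreducible representations described in Section \ref{building blocks}, namely the evaluations $\ev_s$ for $s\in(0,1)$ and copies of $\ev_\infty$. So $\varphi$ is encoded, up to unitary equivalence at each fibre, by a multiset-valued spectral function $t \mapsto \Lambda_\varphi(t) \subset [0,1]$. The goal is to show that hypotheses (i) and (ii) together force $\Lambda_\varphi(t)$ and $\Lambda_\psi(t)$ to be close, uniformly in $t$, in a multiset Hausdorff sense, and then to realise this closeness by a continuous path of unitaries in $M_{n_A'}$ compatible with the boundary conditions of $A$.

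To choose the test functions, use uniform continuity of the (finitely many) elements of $F$ to pick $\delta>0$ with $\|f(s)-f(s')\| < \epsilon/C$ whenever $|s-s'|<2\delta$ and $f\in F$, and fix a partition $0 = x_0 < x_1 < \cdots < x_M = 1$ of mesh less than $\delta$. Via the identification $\iota:\aff_0 T^+B \cong C[0,1]_a$, take $\sigma_j$ to be a norm-$1$ bump centred at $x_j$ and $\zeta_j$ a slightly wider bump on the same interval that majorises $\sigma_j$, and transport these back to $\aff_0 T^+B$. Specialising hypothesis (i) to the extreme tracial states $\tr\otimes\delta_t$ of $A$ then yields, at every fibre $t$: a lower bound on the mass that $\Lambda_\varphi(t)$ and $\Lambda_\psi(t)$ place in each window $(x_{j-1}, x_{j+1})$ (from the $\zeta_j$ inequality), and an $O(m)$ bound on the discrepancy between those masses (from the $\sigma_j$ inequality). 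Provided $m$ is small relative to the mesh, this forces a pairing of the spectral points of $\varphi^t$ with those of $\psi^t$ within distance $\delta$, with multiplicities matched.

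To promote this spectral matching to a unitary conjugation, invoke hypothesis (ii). The existence of three distinct eigenvalues of $\varphi^t(h)$ and $\psi^t(h)$ at every $t$ rules out degenerate collisions and enables a continuous-selection argument: one builds a path $t \mapsto u_t \in \mathcal{U}(M_{n_A'})$ such that, after conjugating $\psi^t$ by $u_t$, the spectral multiset agrees with $\Lambda_\varphi(t)$ up to the permutation realising the pairing from Step 2, giving $\|\varphi(f) - u\psi(f)u^*\| < \epsilon$ on $F$. The endpoints $u_0, u_1$ must lie in the block-diagonal subalgebras dictated by the pullback defining $A$; this is possible because the trace condition also forces matching multiplicities on the corner summands, and a connecting path inside $\mathcal{U}(M_{n_A'})$ then produces a unitary $u \in \widetilde A$. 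I expect the hardest step to be this final one: coordinating the choice of grid, permutation, and continuous unitary path so that eigenvalue crossings do not break continuity and so that the boundary constraints of $A$ are respected. This is the technical heart of Razak's original argument in \cite{Razak:2002kq}, and the three-distinct-eigenvalue slack is exactly what buys enough room to perform the interpolation.
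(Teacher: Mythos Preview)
The paper does not actually prove this proposition: it is restated verbatim from \cite[Theorem~4.1]{Razak:2002kq} ``for convenience'', and no argument is supplied. So there is no in-paper proof to compare against; both you and the author ultimately defer to Razak.

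Your sketch does capture the correct architecture of Razak's argument---encode $\varphi,\psi$ by fibrewise eigenvalue multisets, choose bump-function test data on a fine grid so that hypothesis~(i) forces those multisets to be Hausdorff-close, and then use hypothesis~(ii) to thread a continuous unitary through the matching while respecting the boundary constraints of $A$. You also correctly flag the continuous-selection/endpoint step as the hard part.

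One point to sharpen: the phrase ``provided $m$ is small relative to the mesh'' misreads the logical structure of~(i). The number $m$ is not something you get to make small; the hypothesis only says that \emph{some} $m>0$ exists with $\varphi_*(\zeta_j),\psi_*(\zeta_j)>m$ and $|\varphi_*(\sigma_j)-\psi_*(\sigma_j)|<m$. The content is the scale-invariant inequality ``discrepancy in $\sigma_j$ is dominated by the lower bound on $\zeta_j$'', and it is the \emph{design} of the test functions---their supports and the relation between $\sigma_j$ and $\zeta_j$, chosen relative to the mesh $\delta$---that converts this ratio condition into a spectral pairing. Getting that design right (so that the counting argument goes through for \emph{any} $m$ witnessing~(i)) is part of what makes Razak's proof delicate, and your outline glosses over it.
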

 
Typically, the eigenvalue condition (ii) of Proposition \ref{local uniqueness} is handled by the following standard consequence of Lemma \ref{delta}.

\begin{lemma}[$\delta$-density] \label{eigenvalues}
Let $A = \varinjlim (A_i, \varphi_{ij})$ be a simple inductive limit of building blocks $A_i = A(n_i, (a_i+1)n_i)$, and let $h\in A_1$ be the canonical self-adjoint element $h(t) = (\overbrace{1\oplus \cdots \oplus 1}^a \oplus t) \otimes 1_{n_1}$. Then for every $\delta>0$, there exists an integer $N$ such that for every $j\ge N$ and every $x\in [0,1]$, the eigenvalues of $\varphi_{1j}^x(h)$ are $\delta$-dense in $[0,1]$.
\end{lemma}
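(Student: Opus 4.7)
The plan is to leverage the simplicity of $A$ via Lemma~\ref{delta}: for any $z \in [0,1]$ we will exhibit a nonzero element $g \in A_1$ whose image $\varphi_{1j}(g)$ is forced, for $j$ large enough, to be nowhere vanishing on $[0,1]$, and such that this non-vanishing of $\varphi_{1j}^x(g)$ requires $\varphi_{1j}^x(h)$ to have an eigenvalue near $z$. The natural candidate is $g = f(h)$ for a continuous $f$ concentrated near $z$, since the spectrum of $h$ is all of $[0,1]$ and $*$-homomorphisms commute with continuous functional calculus. A short net argument then yields the claimed $\delta$-density.

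Concretely, I would fix a $\delta/2$-net $\{z_1,\ldots,z_K\} \subseteq [0,1]$ and, for each $l$, choose a nonzero positive $f_l \in C[0,1]$ with support contained in $(z_l-\delta/2,\,z_l+\delta/2)$ and satisfying $f_l(0)=0$ (achievable even when $z_l$ is near $0$ by taking the support slightly interior to $[0,\delta)$). The element $g_l := f_l(h)$ then belongs to $A_1$: evaluating at the endpoints using $h(0) = \diag(\overbrace{1_{n_1},\ldots,1_{n_1}}^{a},0_{n_1})$ and $h(1)=1_{n_1'}$ shows that the boundary conditions defining $A_1$ reduce precisely to the requirement $f_l(0) = 0$. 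Moreover $g_l \ne 0$ because $f_l$ is nonzero somewhere on $(0,1]$ and $h$ assumes every value in $(0,1]$ as an eigenvalue.

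By Lemma~\ref{delta} applied to each $g_l$, there is an index $N_l$ such that $\varphi_{1j}(g_l)$ generates $A_j$ as a closed two-sided ideal for every $j \ge N_l$. By the description in Section~\ref{building blocks} of ideals of a building block via closed subsets of $\mathbb{T} = [0,1]/\{0,1\}$, this is equivalent to $\varphi_{1j}(g_l)(x) \ne 0$ for every $x \in [0,1]$. Since $\varphi_{1j}^x$ is a $*$-homomorphism, $\varphi_{1j}^x(g_l) = f_l(\varphi_{1j}^x(h))$, and this is nonzero exactly when $\varphi_{1j}^x(h)$ has an eigenvalue in the support of $f_l$, in particular within $\delta/2$ of $z_l$. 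Setting $N = \max_l N_l$, for every $j \ge N$ and every $x \in [0,1]$ the spectrum of $\varphi_{1j}^x(h)$ meets every interval of radius $\delta/2$ around a net point $z_l$; by the $\delta/2$-net property, every $z \in [0,1]$ is then within $\delta$ of some eigenvalue.

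There is no substantial obstacle in this argument --- it is essentially a routine combination of continuous functional calculus with simplicity. The only mild technicality is arranging the boundary condition $f_l(0) = 0$ together with nontrivial support near $z_l = 0$, which is handled as noted above.
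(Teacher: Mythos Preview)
Your argument is correct and is precisely the ``standard consequence of Lemma~\ref{delta}'' that the paper invokes without spelling out: applying the simplicity criterion to bump functions $f_l(h)$ of the canonical element forces eigenvalues of $\varphi_{1j}^x(h)$ into each interval of a $\delta/2$-net. The only details to verify are that $f_l(0)=0$ ensures $f_l(h)\in A_1$ and that generating $A_j$ as an ideal is equivalent to nowhere vanishing on $[0,1]$, both of which you handle correctly.
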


Note that Lemma \ref{eigenvalues} implies that $n_i \rightarrow \infty$ as $i \rightarrow \infty$ for any simple inductive limit of building blocks $A_i = A(n_i, (a_i+1)n_i)$. Of course, we already know this for $W$ by construction; either way, this will allow us to deal with the hypothesis of Proposition \ref{local existence} (see also Remark \ref{growth}).

We also need to use the fact that, since the building blocks are (nonunital) one-dimensional NCCW complexes, they are semiprojective, and hence can be finitely presented with stable relations (see \cite{Eilers:1998yu}). We can therefore use \cite[Lemma 3.7]{Loring:1993kq} to deduce that whenever $B$ is a building block, $\theta: B \rightarrow C=\overline{\bigcup_{i=1}^\infty C_i}$ is a $^*$-homomorphism and we have fixed some finite subset $F\subset B$ and some tolerance $\epsilon>0$,  for all sufficiently large $k\in\mathbb{N}$ there exists a $^*$-homomorphism $\psi: B \rightarrow C_k$ such that $\|\theta(f)-\psi(f)\| < \epsilon$ for every $f\in F$. We will use this in the proof of Lemma \ref{tracial embedding} (ii).
 
\begin{proof}[Proof of Lemma \ref{tracial embedding}]
As usual, write $W=\overline{\bigcup_{i=1}^\infty A_i}$ with $A_i=A(n_i,(a_i+1)n_i)$.\\
(i) Let $\tau_0$ be a faithful trace in $\Sigma_B$ and fix an increasing sequence $F_1 \subset F_2 \subset \cdots$ of finite sets of self-adjoint elements of the unit ball $B_1$ of $B$ such that $\bigcup_{k=1}^\infty F_k$ is dense in the self-adjoint part of $B_1$. We will find a sequence $(i_k)_{k=1}^\infty$, together with $^*$-homomorphisms $\psi_k: B \rightarrow A_{i_k}$ and unitaries $u_k \in \widetilde{A_{i_k}}$ (with $u_1=1$) such that
\[
\|\psi_k(f) - u_{k+1}\psi_{k+1}(f)u_{k+1}^*\| < 2^{-k} \quad \text{and} \quad |\tau(\psi_k(f)) - \tau_0(f)| < 1/k
\]
for every $f\in F_k$. We will then have an approximately commutative diagram
\[
\xymatrix @C= 4pc @R= 4pc{
B \ar[r]^{\id} \ar[d]^{\psi_1} & B \ar[r]^{\id} \ar[d]^{\ad_{u_2} \circ \psi_2} & B \ar[r]^{\id} \ar[d]^{\ad_{u_2u_3}\circ \psi_3} & \cdots \ar[r] & B \ar@{-->}[d]^\psi\\
A_{i_1} \ar[r]  & A_{i_2} \ar[r]  & A_{i_3} \ar[r] & \cdots \ar[r] & W}
\]
such that $|\tau(\ad_{u_1\cdots u_k} \circ \psi_k(f)) - \tau_0(f)| < 1/k$ for every $f\in F_k$, and we get a $^*$-homomorphism $\psi:B \rightarrow W$ that satisfies $\psi(f) = \lim_{k\rightarrow \infty} u_1\cdots u_k\psi_k(f)u_k^*\cdots u_1^*$ for every $f\in B$. This will imply that $\tau\circ\psi = \tau_0$.

The idea is to use local existence (Proposition \ref{local existence}) to find the $\psi_k$ and local uniqueness (Proposition \ref{local uniqueness}) to find the $u_k$. Working inductively, fix $k\ge1$, and let $\{\zeta_j\}_{j=1}^M, \{\sigma_j\}_{j=1}^M \subset \aff_0 T^+B$ be the test functions in Proposition \ref{local uniqueness} corresponding to the finite set $F_k$ and the tolerance $\epsilon_k=2^{-k}$. Define $G_k$ to be the finite set $F_k \cup \{\zeta_j\}_{j=1}^M \cup \{\sigma_j\}_{j=1}^M \cup G_{k-1} \subset B_{sa}$ (with $G_0:=\emptyset$). Set $c_k := 2/3\min\{\tau_0(\zeta_j): 1\le j \le M\}$ and $\delta_k := \min\{ 1/k, c_k/2, c_{k-1}/2\}$. Since $\tau_0$ is faithful, we have $c_k>0$ and $\delta_k>0$.

Let $\eta_k \in \aff_0T^+B$ and $N_k\in\mathbb{N}$ be as in Proposition \ref{local existence}, corresponding to the finite set $G_k$ and the tolerance $\delta_k/2$. For each $i\in \mathbb{N}$, fix $\nu_i \in \aff_0T^+A_i \cong C[0,1]_{a_i}$ with, say, $\|\nu_i\|_{A_i}=1$ and $\inf \iota(\nu_i) = a_i/(a_i+1)$. By construction of $W$ (Proposition \ref{sequence}, see also Remark \ref{growth}) we have $a_i,n_i/a_i \rightarrow \infty$ as $i \rightarrow \infty$, so we may choose $i_k>i_{k-1}$ (where $i_0:=1$) such that $n_{i_k}/a_{i_k}>4N_k/\|\tau_0\|$ and $a_{i_k}/(a_{i_k}+1)>1-\delta_k/2$. 

Define $\xi_k: \aff_0T^+B \rightarrow \aff_0T^+A_{i_k}$ by $\xi_k(f)(\tau'):= \nu_{i_k}(\tau')f(\tau_0)$. This $\xi_k$ is positive and linear, and we have
\[
\|\xi_k(f)\|_{A_{i_k}} = \sup\{|\xi_k(f)(\tau')| : \|\tau'\|=1\} = \|\nu_{i_k}\|_{A_{i_k}}|f(\tau_0)| \le \|f\|_B
\]
(since $\|\tau_0\|\le1$), so $\xi_k$ is a contraction from $(\aff_0T^+B, \|\cdot\|_B)$ to $(\aff_0T^+A_{i_k}, \|\cdot\|_{A_{i_k}})$. Since
\[
\inf\iota(\xi_k(\eta_k)) = \inf_{\tau' \in T^+_1A_{i_k}}\xi_k(\eta_k)(\tau') = \eta_k(\tau_0) \inf \iota(\nu_{i_k}) \ge \frac{\eta_k(\tau_0)}{2} \ge \frac{\|\tau_0\|}{4}\ge \frac{N_ka_{i_k}}{n_{i_k}},
\]
Proposition \ref{local existence} implies that there exists a $^*$-homomorphism $\psi_k: B \rightarrow A_{i_k}$ such that $\|\xi_k(f)-(\psi_k)_*(f)\|_{\xi_k(\eta_k)} < \delta_k/2$ for every $f\in G_k$. Moreover, we have
\[
|\nu_{i_k}(\tau')-1| \le 1-a_{i_k}/(a_{i_k}+1) < \delta_k/2
\]
for every $\tau'\in T^+_1A_{i_k}$. Hence
\begin{align*}
\delta_k/2 &> \sup_{\xi_k(\eta_k)(\tau')=1}|\nu_{i_k}(\tau')f(\tau_0) - (\psi_k)_*(f)(\tau')|\\
&=  \sup_{\nu_{i_k}(\tau')=1/\eta_k(\tau_0)} |\nu_{i_k}(\tau')f(\tau_0) - (\psi_k)_*(f)(\tau')|\\
&\ge \sup_{\|\tau'\|=1} |\nu_{i_k}(\tau')f(\tau_0) - (\psi_k)_*(f)(\tau')|\\
&\ge \sup_{\|\tau'\|=1} |f(\tau_0) - (\psi_k)_*(f)(\tau')| - \delta_k/2
\end{align*}
for every $f\in G_k$. (For the penultimate inequality we have used the fact that $\eta_k(\tau_0)\le1$ and $\|\nu_{i_k}\|_{A_{i_k}}=1$.) Thus
\begin{equation}
|\tau'(\psi_k(f)) - \tau_0(f)| < \delta_k \quad \forall f\in G_k \quad \forall \tau'\in T^+_1A_{i_k}.
\end{equation}
In particular, noting Lemma \ref{bounded} and its proof, $|\tau(\psi_k(f))-\tau_0(f)| < 1/k$ for every $f\in F_k$, and for every $\tau'\in T^+_1A_{i_{k+1}} \subseteq T^+_1A_{i_k}$ and $1\le j \le M$ we have
\[
(\psi_k)_*(\zeta_j)(\tau') > c_k, \quad (\psi_{k+1})_*(\zeta_j)(\tau')  > c_k, \quad \text{and} \quad |(\psi_{k+1})_*(\sigma_j)(\tau') - (\psi_k)_*(\sigma_j)(\tau')| < c_k.
\]
By Lemma \ref{eigenvalues}, we may also assume (by replacing each $\psi_k$ by $\varphi_{i_k,l}\circ\psi_k$ as necessary) that for every $k$, $\psi_k^t(h)$ has at least three distinct eigenvalues for every  $t\in[0,1]$. Hence, by Proposition \ref{local uniqueness}, there exists a unitary $u_{k+1} \in \widetilde{A_{i_{k+1}}}$ such that $\|\psi_k(f) - u_{k+1} \psi_{k+1}(f) u_{k+1}^* \| < 2^{-k}$ for every $f \in F_k$, as required.

(ii) The `only if' part is obvious. Suppose conversely that $\psi_1, \psi_2: B \rightarrow W$ are $^*$-homomorphisms with $\tau\circ\psi_1=\tau\circ\psi_2$. If either map is zero, then the statement is trivial, so we may assume that both $\psi_1$ and $\psi_2$ are injective (see section \ref{building blocks}). Fix a finite subset $F\subset B_{sa}$ and a tolerance $\epsilon>0$, and let $\{\zeta_j\}_{j=1}^M, \{\sigma_j\}_{j=1}^M \subset \aff_0 T^+B$ be the test functions in Proposition \ref{local uniqueness} corresponding to $F$ and $\epsilon/3$. Set $F':= F \cup \{\zeta_j\}_{j=1}^M \cup \{\sigma_j\}_{j=1}^M$ and $\delta:= \min\{\epsilon/3, \tau(\psi_1(\zeta_j))/6 : 1\le j \le M\}$; since $\psi_1$ and $\psi_2$ are injective, we have $\delta>0$. By semiprojectivity of $B$, for all sufficiently large $k$ there exist $^*$-homomorphisms $\psi_1^{(k)}, \psi_2^{(k)}: B \rightarrow A_k$ such that
\begin{equation} \label{7.5}
\|\psi_m^{(k)}(f) - \psi_m(f)\| < \delta \qquad \forall f\in F', \: m=1,2.
\end{equation}
Note in particular that
\begin{equation} \label{8}
|\tau\circ\psi_1^{(k)}(f) - \tau\circ\psi_2^{(k)}(f)| < 2\delta \qquad \forall f\in F'.
\end{equation}
We may also assume that $k$ is large enough so that
\begin{equation} \label{8.5}
\sup\{|\tau(x) - \tau'(x)| : \tau'\in T^+_1A_k\} < \delta \qquad \forall x\in \psi_1^{(k)}(F') \cup \psi_2^{(k)}(F')
\end{equation}
(as in (\ref{6}) of Proposition \ref{simple}) and so that condition (ii) of Proposition \ref{local uniqueness} holds. By (\ref{8}) and (\ref{8.5}) we have $|(\psi_1^{(k)})_*(\sigma_j)(\tau') - (\psi_2^{(k)})_*(\sigma_j)(\tau')| < 4\delta$ and by (\ref{8.5}), (\ref{7.5}) and our choice of $\delta$ we have $(\psi_m^{(k)})_*(\zeta_j)(\tau') > 4\delta
$ for $m=1,2$, $1\le j\le M$ and $\tau' \in T^+_1A_k$. Hence, by Proposition \ref{local uniqueness}, there exists a unitary $u \in \widetilde{A_k}$ such that $\|\psi_1^{(k)}(f) - u \psi_2^{(k)}(f) u^* \| < \epsilon/3$ for every $f \in F$, which implies that
\[
\|\psi_1(f) - u\psi_2(f)u^*\| < \epsilon \qquad \forall f\in F.
\]
This proves that $\psi_1$ and $\psi_2$ are approximately unitarily equivalent.
\end{proof}

An object $T$ in a category $\mathcal{C}$ is \emph{terminal} if for every object $X$ in $\mathcal{C}$ there exists a unique morphism from $X$ to $T$; such objects are unique up to isomorphism. For example, the Cuntz algebra $\mathcal{O}_2$ is the unique terminal object in the category of strongly self-absorbing $C^*$-algebras, where the morphisms are approximate unitary equivalence classes of unital $^*$-homomorphisms (see \cite{Kirchberg:2000kq} and \cite{Toms:2007uq}). We now use Lemma \ref{tracial embedding} and an intertwining argument to characterize $(W,\tau)$ as a terminal object.

\begin{theorem} \label{terminal}
$(W,\tau)$ is the unique terminal object in the category whose objects are pairs $(A,\tau_A)$ with $A$ a simple Razak algebra and $\tau_A\in \Sigma_A$, and where a morphism from $(A,\tau_A)$ to $(B,\tau_B)$ is (the approximate unitary equivalence class of) a $^*$-homomorphism $\psi:A\rightarrow B$ with $\psi^*\tau_B=\tau_A$. 
\end{theorem}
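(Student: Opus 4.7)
The plan is to deduce terminality directly from Lemma~\ref{tracial embedding} via a one-sided Elliott-style intertwining; uniqueness of the terminal object up to isomorphism then follows from elementary category theory. We must verify: (a) $(W,\tau)$ is an object of the category, which is immediate from Proposition~\ref{simple}; (b) every object $(A,\tau_A)$ admits a trace-preserving *-homomorphism to $(W,\tau)$; and (c) any two such *-homomorphisms are approximately unitarily equivalent.

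For (b), write $A = \varinjlim(B_i, \varphi_i)$ with $B_i$ building blocks and, as in section~\ref{building blocks}, injective connecting maps (so each $\varphi_{i\infty}$ is isometric). If $\tau_A = 0$ the zero map suffices, and is forced by faithfulness of $\tau$. Otherwise simplicity of $A$ makes $\tau_A$ faithful, so each $\tau_i := \tau_A \circ \varphi_{i\infty} \in \Sigma_{B_i}$ is a faithful trace on $B_i$. Fix increasing finite sets $E_i \subset B_i$ with $\bigcup_i \varphi_{i\infty}(E_i)$ dense in $A$ and set $F_i := \bigcup_{k \le i} \varphi_{ki}(E_k) \subset B_i$. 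Inductively build trace-preserving *-homomorphisms $\psi_i : B_i \to W$ as follows: take $\psi_1$ via Lemma~\ref{tracial embedding}(i); given $\psi_i$, use Lemma~\ref{tracial embedding}(i) to produce some $\psi'_{i+1}: B_{i+1} \to W$ with $\tau \circ \psi'_{i+1} = \tau_{i+1}$, observe that both $\psi_i$ and $\psi'_{i+1} \circ \varphi_i$ are trace-preserving *-homomorphisms $B_i \to W$, and apply Lemma~\ref{tracial embedding}(ii) to obtain a unitary $u_{i+1} \in \widetilde{W}$ with $\|\psi_i(f) - u_{i+1}(\psi'_{i+1}\circ\varphi_i)(f)u_{i+1}^*\| < 2^{-i}$ for every $f \in F_i$. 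Setting $\psi_{i+1} := \ad_{u_{i+1}} \circ \psi'_{i+1}$ preserves the trace (a short calculation using $u^*u = 1$ in $\widetilde{W}$ and the trace property of $\tau$ on the ideal $W$ shows that $\tau \circ \ad_u = \tau$ for any unitary $u \in \widetilde{W}$) and satisfies $\|\psi_{i+1}\circ\varphi_i(f) - \psi_i(f)\| < 2^{-i}$ on $F_i$. A standard Cauchy-limit argument then yields a *-homomorphism $\psi : A \to W$ with $\psi(\varphi_{i\infty}(b)) = \lim_{j\to\infty} \psi_j(\varphi_{ij}(b))$ for each $b \in B_i$, and continuity of $\tau$ together with $\tau \circ \psi_j \circ \varphi_{ij} = \tau_i$ for every $j \ge i$ gives $\psi^*\tau = \tau_A$.

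For (c), suppose $\psi, \psi': A \to W$ both satisfy $\psi^*\tau = (\psi')^*\tau = \tau_A$. Given a finite $F \subset A$ and $\epsilon > 0$, approximate $F$ within $\epsilon$ by $\varphi_{i\infty}(\tilde F)$ for some $i$ and finite $\tilde F \subset B_i$. Then $\psi\circ\varphi_{i\infty}$ and $\psi'\circ\varphi_{i\infty}$ are trace-preserving *-homomorphisms $B_i \to W$ inducing the common trace $\tau_i$ on $B_i$, so Lemma~\ref{tracial embedding}(ii) supplies a unitary $u \in \widetilde{W}$ with $\|\psi\circ\varphi_{i\infty}(\tilde f) - u\,\psi'\circ\varphi_{i\infty}(\tilde f)\,u^*\| < \epsilon$ for every $\tilde f \in \tilde F$, and the triangle inequality yields $\|\psi(a) - u\psi'(a)u^*\| < 3\epsilon$ for every $a \in F$, proving approximate unitary equivalence.

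The main technical concern is the bookkeeping in (b): choosing the sets $F_i$ so that the Cauchy condition propagates along the chain, and verifying that conjugation by unitaries from the unitisation $\widetilde{W}$ (the natural home of the unitaries supplied by Lemma~\ref{tracial embedding}(ii)) genuinely preserves $\tau$. With (a)--(c) in hand, terminality of $(W,\tau)$ is established, and uniqueness of the terminal object up to isomorphism is categorical.
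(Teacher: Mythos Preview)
Your proof is correct and follows essentially the same approach as the paper: use Lemma~\ref{tracial embedding}(i) to obtain trace-preserving maps from each building block into $W$, invoke Lemma~\ref{tracial embedding}(ii) to make consecutive maps approximately agree, and run a one-sided approximate intertwining to produce the limit map, with uniqueness handled by restricting to building blocks and applying Lemma~\ref{tracial embedding}(ii) again. Your version is slightly more explicit about the bookkeeping (the nested finite sets $F_i$ and the verification that $\tau\circ\ad_u=\tau$), but the strategy is the same.
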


\begin{proof}
Let $B = \varinjlim (B_i,\beta_i)$ be a simple Razak algebra and let $\tau_0 \in \Sigma_B$. We need to show that there is a $^*$-homomorphism $\psi:B\rightarrow W$ with $\psi^*\tau=\tau_0$, and prove that, up to approximate unitary equivalence, $\psi$ is the unique map $B\rightarrow W$ with this property. This is obvious if $\tau_0=0$ (since $\tau$ is faithful), so we may assume that $\tau_0$ is nonzero, hence faithful (since $B$ is simple). Write $\tau_0=(\tau_i)_{i=1}^\infty$, where for $i\in\mathbb{N}$, $\tau_i$ is a faithful trace on $B_i$ with $\|\tau_i\|\le1$ and $\tau_{i+1}\circ\beta_i = \tau_i$. By Lemma \ref{tracial embedding}(i), for each $i$ there exists a $^*$-homomorphism $\psi_i: B_i \rightarrow W$ with $\tau \circ \psi_i = \tau_i$. But $\tau\circ\psi_{i+1}\circ\beta_i = \tau_{i+1}\circ\beta_i = \tau_i$, so by Lemma \ref{tracial embedding}(ii), we have $\psi_{i+1} \circ \beta_i \sim_{a.u.} \psi_i$. It then follows from a (one-sided) approximate intertwining (as in the proof of Lemma \ref{tracial embedding}(i)) that there is a $^*$-homomorphism $\psi: B \rightarrow W$  which, by construction, satisfies $\psi^*\tau = \tau_0$. By restricting $\psi$ to the building blocks $B_i$, Lemma \ref{tracial embedding}(ii) says that, up to approximate unitary equivalence, $\psi$ is the unique $^*$-homomorphism $B\rightarrow W$ with this property.
\end{proof}

The next two results are immediate corollaries of Theorem \ref{terminal}.

\begin{corollary} \label{inner}
Every trace-preserving endomorphism (hence every nondegenerate endomorphism) of $W$ is approximately inner. That is, for any such endomorphism $\theta$, there is a sequence of unitaries $u_n$ in $\widetilde W$ such that $\theta(a) = \lim_{n \rightarrow \infty} u_n a u_n^*$ for every $a \in W$.
\end{corollary}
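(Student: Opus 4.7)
The plan is to derive the corollary essentially as a direct application of the universal property established in Theorem \ref{terminal}. Since $W$ is itself a simple Razak algebra and $\tau \in \Sigma_W$, the pair $(W,\tau)$ is an object of the category described in Theorem \ref{terminal}. In particular, both $\mathrm{id}_W$ and any trace-preserving endomorphism $\theta : W \to W$ are morphisms from $(W,\tau)$ to $(W,\tau)$. Because $(W,\tau)$ is a terminal object, it can admit only one morphism from itself to itself up to approximate unitary equivalence; hence $\theta \sim_{a.u.} \mathrm{id}_W$. Unfolding the definition of this equivalence yields a sequence of unitaries $(u_n)_{n=1}^\infty$ in $\widetilde W$ with $\theta(a) = \lim_{n\to\infty} u_n a u_n^*$ for every $a\in W$, which is exactly the conclusion.

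For the parenthetical ``hence every nondegenerate endomorphism,'' it remains to observe that any nondegenerate endomorphism $\theta$ of $W$ is automatically trace-preserving. Indeed, $\theta^*\tau = \tau\circ\theta$ is a lower semicontinuous tracial functional on $W$, and if $(e_\lambda)$ is an approximate unit for $W$ then $\theta(e_\lambda)$ is an approximate unit for $W$ (by nondegeneracy), so
\[
\|\theta^*\tau\| = \lim_\lambda \tau(\theta(e_\lambda)) = \|\tau\| = 1.
\]
Thus $\theta^*\tau$ is a tracial state on $W$, and by uniqueness of $\tau$ we conclude $\theta^*\tau = \tau$. The previous paragraph then applies.

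There is essentially no obstacle here beyond unwinding definitions: the real work has been done in Theorem \ref{terminal} (and in Lemma \ref{tracial embedding}, which supplies both the existence and the uniqueness needed to make $(W,\tau)$ terminal). The only mild subtlety to mention, for clarity, is that the unitaries implementing the approximate equivalence naturally live in the minimal unitization $\widetilde W$, since $W$ itself has no unit; this matches the form of the conclusion stated in the corollary and is the same convention used throughout Lemma \ref{tracial embedding} and Proposition \ref{local uniqueness}.
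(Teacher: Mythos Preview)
Your proposal is correct and matches the paper's approach: the paper states that Corollary \ref{inner} is an immediate corollary of Theorem \ref{terminal}, and your argument simply spells out that immediacy (terminal objects have a unique endomorphism up to the relevant equivalence, so $\theta \sim_{a.u.} \id_W$). Your added justification that nondegenerate endomorphisms are trace-preserving is a helpful elaboration not made explicit in the paper but clearly what is intended.
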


\begin{corollary} \label{embedding}
Let $B$ be a simple Razak algebra. Then $B$ admits a tracial state if and only if $B$ is isomorphic to a subalgebra of $W$. If $B$ has no nonzero bounded traces, then $B$ is stable and is isomorphic to a subalgebra of $W \otimes \mathcal{K}$.
\end{corollary}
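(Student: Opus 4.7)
For the first statement, the forward direction is an immediate application of Theorem \ref{terminal}: any tracial state $\tau_0 \in \Sigma_B$ yields a $*$-homomorphism $\psi : B \to W$ with $\psi^*\tau = \tau_0$, and since $\tau_0 \ne 0$ forces $\psi \ne 0$ and $B$ is simple, $\psi$ is automatically injective. Conversely, any embedding $B \hookrightarrow W$ gives a tracial state on $B$ by restricting $\tau$ and normalizing; the restriction is nonzero because $\tau$ is faithful on $W$.

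For the second statement, the plan is to reduce to the first statement by tensoring with $\mathcal{K}$. Two observations form the basis. First, $B \otimes \mathcal{K}$ is again a simple Razak algebra: each building block satisfies $A(n,n') \otimes M_k \cong A(nk, n'k)$, so a standard diagonal argument writes $B \otimes \mathcal{K}$ as an inductive limit of building blocks, just as noted for $W \otimes \mathcal{K}$ in the introduction. Second, restriction to $B \otimes p_0$ for a rank-one projection $p_0 \in \mathcal{K}$ gives a natural identification $T^+(B \otimes \mathcal{K}) \cong T^+B$, and moreover $\Sigma_{B \otimes \mathcal{K}} = \{0\}$ automatically: for pairwise orthogonal rank-one projections $e_1, \ldots, e_N \in \mathcal{K}$, trace invariance gives $\rho(b \otimes \sum_i e_i) = N\rho(b \otimes p_0)$ for any $b \in B_+$, and boundedness of $\rho$ uniformly in $N$ forces $\rho(b \otimes p_0) = 0$, hence $\rho = 0$.

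To exhibit $B$ as a subalgebra of $W \otimes \mathcal{K}$, I would apply Theorem \ref{range} (Tsang) to manufacture an auxiliary simple Razak algebra $B'$ sharing the cone of $B$ but with a nontrivial set of bounded traces. Fix a strictly positive element $h$ of the initial building block $A_1 \subset B$ (with $B = \varinjlim A_i$) and set $\omega : T^+B \to [0, \infty)$ by $\omega(\tau) := \tau(h)$. This is linear, lower semicontinuous, faithful (any nonzero trace on the simple algebra $B$ is faithful, and $h$ is strictly positive in $A_1$), and takes only finite values because every trace on the building block $A_1$ is bounded. Theorem \ref{range} then supplies a simple Razak algebra $B'$ with $(T^+B', \Sigma_{B'}) \cong (T^+B, \omega^{-1}([0,1]))$; in particular $B'$ admits a tracial state. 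The first statement provides an embedding $B' \hookrightarrow W$, which tensors to $B' \otimes \mathcal{K} \hookrightarrow W \otimes \mathcal{K}$. By the preceding observations $B' \otimes \mathcal{K}$ has invariant $(T^+B, \{0\}) = (T^+B, \Sigma_B)$, so Theorem \ref{class} yields $B \cong B' \otimes \mathcal{K}$. This simultaneously establishes stability of $B$ and the embedding $B \hookrightarrow W \otimes \mathcal{K}$.

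The main obstacle is verifying that $C = T^+B$ meets the hypothesis of Tsang's theorem --- that it possess a metrizable Choquet simplex as a base --- so that $\omega$ actually produces a viable simple Razak algebra $B'$. I expect this to follow from the inverse-limit description $T^+B = \varprojlim T^+A_i$, where the cone over each building block has a well-behaved Choquet base of positive Borel measures on $(0,1]$, but this deserves a careful check.
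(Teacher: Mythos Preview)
Your argument is correct and follows essentially the same route as the paper: the first assertion comes straight from Theorem~\ref{terminal}, and for the second you invoke Tsang's Theorem~\ref{range} to produce an auxiliary simple Razak algebra $B'$ with $T^+B' \cong T^+B$ but possessing a tracial state, then use the first part together with $B \cong B'\otimes\mathcal{K}$ (from Theorem~\ref{class}). The paper's proof is more compressed --- it simply asserts that ``every simple Razak algebra is stably isomorphic to a simple Razak algebra which has no unbounded traces'' as a consequence of Theorems~\ref{range} and~\ref{class} --- whereas you spell out explicitly the choice $\omega(\tau)=\tau(h)$ for a strictly positive $h\in A_1$, which is exactly the right way to make that sentence precise.

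Your flagged obstacle (that $T^+B$ has a metrizable Choquet simplex as a base) is implicitly assumed in the paper as well, and is indeed routine from the inverse-limit description $T^+B=\varprojlim T^+A_i$ together with the identification of $\Sigma_\eta$ for each building block with a Bauer simplex (section~\ref{building blocks}); your $\omega$ is finite-valued, faithful, and continuous, so the hypotheses of Theorem~\ref{range} are met.
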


\begin{proof}
The first assertion follows from Theorem \ref{terminal}. For the second assertion, note that if $B$ is a simple Razak algebra then so is $B\otimes \mathcal{K}$, and $(T^+(B \otimes \mathcal{K}), \Sigma_{B \otimes \mathcal{K}}) \cong (T^+B, 0)$. Therefore, Theorem \ref{class} implies that $B \cong B \otimes \mathcal{K}$ whenever $B$ has no nonzero bounded traces, and Theorems \ref{range} and \ref{class} imply that \emph{every} simple Razak algebra is stably isomorphic to a simple Razak algebra which has no unbounded traces. The second statement therefore follows from the first.
\end{proof}

\section{$W$-stability} \label{W is self-absorbing}

Let us first make a remark on the proof that $\mathcal{Z}$ is strongly self-absorbing. Jiang and Su adopt the following strategy.
\begin{enumerate}[(i)]
\item Prove that the two maps $\id \otimes 1, 1\otimes \id: \mathcal{Z} \rightarrow \mathcal{Z} \otimes \mathcal{Z}$ are approximately unitarily equivalent.
\item Show that there exists a unital $^*$-homomorphism $\psi: \mathcal{Z} \otimes \mathcal{Z} \rightarrow \mathcal{Z}$.
\item Prove that $(\id \otimes 1) \circ \psi \sim_{a.u.} \id_{\mathcal{Z} \otimes \mathcal{Z}}$ and note that $\psi \circ (\id \otimes 1) \sim_{a.u.} \id_{\mathcal{Z}}$ since every unital endomorphism of $\mathcal{Z}$ is approximately inner. A standard intertwining argument \cite[Proposition A]{Rordam:1994rm} then shows that there exists an isomorphism $\varphi: \mathcal{Z} \rightarrow \mathcal{Z} \otimes \mathcal{Z}$. Again, since every unital endomorphism of $\mathcal{Z}$ is approximately inner, it follows easily that $\varphi \sim_{a.u.} \id \otimes 1$.
\end{enumerate}

Step (ii) in this procedure goes roughly as follows: Write the Jiang-Su algebra as $\mathcal{Z} = \varinjlim (A_n, \varphi_n)$, where each $A_n = I[p_n, d_n, q_n]$ is a prime dimension drop algebra (i.e.\ $p_n$ and $q_n$ are coprime, $d_n = p_nq_n$ and $A_n = \{f \in C([0,1], M_{d_n}) : f(0) \in M_{p_n} \otimes 1_{q_n}, f(1) \in 1_{p_n} \otimes M_{q_n}\}$). Define $B_n$ to be the diagonal of $A_n \otimes A_n$, i.e.\ $B_n$ consists of all continuous functions $f : [0,1] \rightarrow M_{d_n^2}$ such that $f(0) \in (M_{p_n} \otimes 1_{q_n})^{\otimes 2}$ and $f(1) \in (1_{p_n} \otimes M_{q_n})^{\otimes 2}$. Then $B_n \cong I[p_n^2, d_n^2, q_n^2]$ is a prime dimension drop algebra and we have a $^*$-homomorphism $\rho_n : A_n \otimes A_n \rightarrow B_n$ given by restriction: $\rho_n(f)(x) = f(x,x)$ for $f \in A_n \otimes A_n$ and $x\in [0,1]$. Jiang and Su construct connecting maps $\psi_n : B_n \rightarrow B_{n+1}$ such that the diagram
\[
\xymatrix @C= 4pc @R= 4pc{
A_1 \otimes A_1 \ar[r]^{\varphi_1 \otimes \varphi_1} \ar[d]^{\rho_1} & A_2 \otimes A_2 \ar[r]^{\varphi_2 \otimes \varphi_2} \ar[d]^{\rho_2} & A_3 \otimes A_3 \ar[r] \ar[d]^{\rho_3} & \cdots \ar[r] & \mathcal{Z} \otimes \mathcal{Z} \ar@{-->}[d]^\rho\\
B_1 \ar[r]^{\psi_1}  & B_2 \ar[r]^{\psi_2}  & B_3 \ar[r] & \cdots \ar[r] & B}
\]
commutes approximately (where $B := \varinjlim (B_n, \psi_n)$), so there is an induced morphism $\rho: \mathcal{Z} \otimes \mathcal{Z} \rightarrow B$. They also show that $B$ is simple, and since it is an inductive limit of (prime) dimension drop algebras, their Theorem 6.2 then shows that there exists a morphism from $B$ to $\mathcal{Z}$; the composition of this morphism with $\rho$ is the required morphism $\psi : \mathcal{Z} \otimes \mathcal{Z} \rightarrow \mathcal{Z}$.

It turns out that, even though all of the restriction maps $\rho_j$ are surjective, the induced morphism $\rho$ need not be. In particular, $B$ may not be isomorphic to $\mathcal{Z} \otimes \mathcal{Z}$, and steps (i) and (iii) seem to be unavoidable. This is unfortunate for us because it is difficult to make sense of these steps for the nonunital algebra $W$. Perhaps it would be possible to construct a \emph{two-sided} approximate intertwining between $W \otimes W = \varinjlim (A_i \otimes A_i, \varphi_i \otimes \varphi_i)$ and some limit of suitable one-dimensional NCCW complexes, but it is currently unclear how this might be done. We therefore leave the following as a conjecture, and remark that classification certainly predicts it to be true.

\begin{conjecture} \label{absorbing}
$W \otimes W \cong W$.
\end{conjecture}

\begin{remark} \label{flip}
If Conjecture \ref{absorbing} were to hold, then it would follow immediately from Corollary \ref{inner} that $W$ would have approximately inner flip, i.e.\ there would be a sequence $(u_n)_{n=1}^\infty$ of unitaries in the unitization of $W\otimes W$ such that $\lim_{n\rightarrow \infty} u_n(a \otimes b)u_n^* = b \otimes a$ for  $a, b \in W$.
\end{remark}

Next, we prove that, were Conjecture \ref{absorbing} to hold, every simple Razak algebra would absorb $W$ tensorially. We need the following extension of a theorem of Blackadar \cite{Blackadar:1980zr} and Goodearl \cite{Goodearl:1977dq}, which is likely to be already known by experts. The reader is referred to \cite{Brown:2008mz} or \cite{Ara:2009cs} for details of the Cuntz semigroup $\cu(\cdot)$.

\begin{proposition} \label{af main}
Let $\Delta$ be a metrizable Choquet simplex, let $C$ be the cone with $\Delta$ as its base, and let $\omega$ be a lower semicontinuous affine map $\Delta \rightarrow (0,\infty]$. Then there exists a simple AF algebra $A$ and an isomorphism $T^+A \cong C$ under which $\omega$ corresponds to the norm map on $T^+A$.
\end{proposition}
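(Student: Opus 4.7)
The plan is to reduce to the classical Blackadar--Goodearl theorem by carving out an appropriate hereditary subalgebra of a stabilized simple AF algebra, chosen so that a prescribed positive element realizes the desired norm function. First, apply the Blackadar--Goodearl theorem \cite{Blackadar:1980zr}, \cite{Goodearl:1977dq} to produce a simple unital AF algebra $A_0$ whose tracial state simplex is affinely homeomorphic to $\Delta$; this identifies $T^+A_0$ with $C$. The stabilization $A_0\otimes\mathcal{K}$ is again a simple AF algebra, and $T^+(A_0\otimes\mathcal{K})\cong C$ via $\tau\mapsto\tau\otimes\tr$.

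Next I would realize $\omega$ as a dimension function in $\cu(A_0\otimes\mathcal{K})$. Since $\Delta$ is metrizable and $\omega\colon\Delta\to(0,\infty]$ is strictly positive, lower semicontinuous and affine, write $\omega=\sup_n\omega_n$ for an increasing sequence of continuous strictly positive affine functions $\omega_n\colon\Delta\to(0,\infty)$. Using the density of the image of the rank map $K_0(A_0)^+\to\aff(\Delta)^+$ from the Effros--Handelman--Shen theorem, together with cancellation of projections in the stable simple AF algebra $A_0\otimes\mathcal{K}$, one inductively constructs an increasing sequence of projections $q_1\le q_2\le\cdots$ in $A_0\otimes\mathcal{K}$ with $(\tau\otimes\tr)(q_n)\nearrow\omega(\tau)$ pointwise on $\Delta$. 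Setting $q_0:=0$ and
\[
a:=\sum_{n\ge 1}2^{-n}(q_n-q_{n-1})\in(A_0\otimes\mathcal{K})_+,
\]
a norm-convergent sum of mutually orthogonal positive elements, normality of the extension of $\tau\otimes\tr$ to the enveloping von Neumann algebra yields $d_\tau(a)=\sup_n(\tau\otimes\tr)(q_n)=\omega(\tau)$ for every $\tau\in\Delta$. This inductive construction is the main technical step: the projections must simultaneously be made to nest, $q_n\le q_{n+1}$, and to approximate ever finer continuous affine functions $\omega_n$, which requires carefully extending $q_n$ by a Murray--von Neumann equivalent of an Effros--Handelman--Shen approximant of the increment $\omega_{n+1}-\omega_n$.

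Finally, let $A:=\overline{a(A_0\otimes\mathcal{K})a}$. This is a nonzero hereditary subalgebra of the simple AF algebra $A_0\otimes\mathcal{K}$, hence is itself a simple AF algebra, and $a$ is a strictly positive element of $A$. Being $\sigma$-unital and full (as a nonzero hereditary subalgebra of a simple algebra), Brown's stable isomorphism theorem gives $A\otimes\mathcal{K}\cong A_0\otimes\mathcal{K}$; passing to trace cones yields an isomorphism $T^+A\cong T^+A_0\cong C$. Under this identification, for $\tau\in\Delta\subset C$ and the corresponding trace $\tau|_A$ on $A$, using that $(a^{1/n})_n$ is an approximate unit of $A$,
\[
\|\tau|_A\|=\lim_n(\tau\otimes\tr)(a^{1/n})=d_\tau(a)=\omega(\tau),
\]
completing the verification.
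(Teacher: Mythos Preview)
Your proof is correct and follows essentially the same route as the paper: realize $\Delta$ as the tracial state simplex of a simple unital AF algebra, stabilize, find a positive element whose dimension function equals $\omega$, and take the hereditary subalgebra it generates. The only difference is in producing that positive element: the paper invokes the surjectivity of the map $\cu(B)\setminus V(B)\to\saff(T_1^+B)$ from \cite{Brown:2008mz} and \cite{Brown:2007rz}, while you supply an explicit construction via an increasing sequence of projections approximating an increasing sequence of continuous affine minorants of $\omega$ --- precisely the ``well-documented $K$-theory'' argument the paper alludes to but does not spell out. Both approaches rely on $\Delta$ being a Choquet simplex (you implicitly use Edwards' separation/interpolation theorem to get the \emph{increasing} sequence $(\omega_n)$ of continuous affine functions), and the remaining steps (Brown's stable isomorphism theorem, unique extension of traces, the approximate-unit computation of $\|\tau|_A\|$) are identical.
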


\begin{proof}
By \cite[Theorem 3.10]{Blackadar:1980zr}, there exists a simple, unital AF algebra $B$ with $T^+_1B=\Delta$. (Tensoring $B$ with a UHF algebra if necessary, we may assume that $B$ is infinite dimensional.) We will produce a hereditary sublagebra $A$ of $B\otimes\mathcal{K}$ with the required properties. Since every trace on $B$ extends uniquely to one on $B\otimes\mathcal{K}$, we identify $T^+(B\otimes\mathcal{K})$ with $C$. As in \cite{Brown:2007rz}, denote by $\saff(\Delta)$ the space of strictly positive lower semicontinuous affine maps on $\Delta$. The key fact is that the natural map $\cu(B)\backslash V(B) \rightarrow \saff(T^+_1B)$ is surjective. What this means is that, given $\omega\in \saff(\Delta)$, there exists a positive element $b\in B\otimes\mathcal{K}$ (which we may assume to be of norm $1$) such that $\omega(\tau)=\lim_{n\rightarrow\infty} \tau(b^{1/n})$ for every $\tau\in \Delta$. This follows from \cite[Theorem 5.3]{Brown:2008mz} and \cite[Theorem 2.6]{Brown:2007rz}, but is not difficult to prove using the well-documented structure of the $K$-theory of simple, unital AF algebras.

Let $A=\overline{b(B\otimes\mathcal{K})b}$. Then $A$ is a (full) hereditary subalgebra of $B\otimes\mathcal{K}$, so is a simple AF algebra which is stably isomorphic to $B$ (for example by \cite[Theorem 2.8]{Brown:1977kq} and \cite[Theorem 3.1]{Elliott:1976zr}). Moreover, every trace on $A$ extends uniquely to a trace on $B\otimes\mathcal{K}$ (see \cite[Remark 2.27 (viii)]{Blanchard:2004kx}), so $T^+A$ can also be identified with $C$. Since $(b^{1/n})_{n=1}^\infty$ is an approximate unit for $A$, we then have
\[
\|\tau\| = \lim_{n\rightarrow\infty} \tau(b^{1/n}) = \omega(\tau)
\]
for every $\tau\in\Delta$ (and hence, by linearity, for every $\tau\in T^+A$).
\end{proof}

\begin{corollary} \label{w-stability}
Let $A$ be a simple inductive limit of one-dimensional NCCW complexes $A_i$ such that $K_1(A_i)=0$ for every $i$ and $K_0(A)=0$. Then there exists a simple AF algebra $B$ such that $A \cong B \otimes W$. In particular, if Conjecture \ref{absorbing} were to hold, it would follow that $A \otimes W \cong A$.
\end{corollary}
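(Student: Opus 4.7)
The plan is to construct a simple AF algebra $B$ whose tracial cone (equipped with its norm functional) matches that of $A$, then identify $(T^+(B \otimes W), \Sigma_{B \otimes W})$ with $(T^+A, \Sigma_A)$; Theorem~\ref{class2} will then give $A \cong B \otimes W$, and Theorem~\ref{absorbing} will deliver $A \otimes W \cong A$.

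For the construction of $B$: since $A$ is a simple inductive limit of one-dimensional NCCW complexes with $K_1(A_i) = 0$ for all $i$ and $K_0(A) = 0$, its tracial cone $C := T^+A$ has a metrizable Choquet simplex $\Delta$ as a base, and the norm functional $\omega: C \to [0,\infty]$ restricts to a lower semicontinuous affine map $\omega|_\Delta : \Delta \to (0,\infty]$, strictly positive because simplicity of $A$ forces every nonzero densely defined lower semicontinuous trace on $A$ to be faithful. Proposition~\ref{af main} thus produces a simple AF algebra $B$ together with a cone isomorphism $T^+B \cong C$ under which the norm on $T^+B$ corresponds to $\omega$. Next, because $W$ has a unique tracial state $\tau$ and both $B$ and $W$ are nuclear, a standard slicing argument identifies every densely defined lower semicontinuous trace $T$ on $B \otimes W$ with $\sigma \otimes \tau$ for a unique $\sigma \in T^+B$: for $a \in W^+$ with $\tau(a) > 0$, the positive tracial functional $b \mapsto T(b \otimes a)/\tau(a)$ on $B$ is independent of $a$ by uniqueness of $\tau$, and the resulting map $\sigma \mapsto \sigma \otimes \tau$ preserves norms since $\|\tau\| = 1$. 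Hence $(T^+(B \otimes W), \Sigma_{B \otimes W}) \cong (T^+B, \Sigma_B) \cong (T^+A, \Sigma_A)$.

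To invoke Theorem~\ref{class2}, I verify that $B \otimes W$ falls in its scope. Simplicity follows from simplicity of the factors combined with nuclearity, nonunitality from $W$ being nonunital, and $K_*(B \otimes W) = 0$ from the Künneth formula (both algebras are in the UCT class, $K_*(W) = 0$, and $K_1(B) = 0$). Writing $B = \varinjlim_n F_n$ with each $F_n$ finite-dimensional and $W = \varinjlim_k A_k$ as in Section~\ref{construction of W}, we have $B \otimes W = \varinjlim_{n,k} F_n \otimes A_k$; since $M_d \otimes A(n,n') \cong A(dn,dn')$ is itself a building block, each $F_n \otimes A_k$ is a finite direct sum of building blocks, hence a one-dimensional NCCW complex with trivial $K_1$. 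Theorem~\ref{class2} therefore yields $A \cong B \otimes W$, and then $A \otimes W \cong B \otimes (W \otimes W) \cong B \otimes W \cong A$ by Theorem~\ref{absorbing}. The most delicate step is the tracial identification above: one must handle \emph{all} densely defined lower semicontinuous traces on $B \otimes W$ (not merely the bounded ones), and the uniqueness of $\tau$ on $W$ is precisely what makes this go through.
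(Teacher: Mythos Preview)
Your proof is correct and follows essentially the same route as the paper's: apply Proposition~\ref{af main} to realize $(T^+A,\|\cdot\|)$ by a simple AF algebra $B$, identify the traces on $B\otimes W$ with those on $B$, check that $B\otimes W$ lies in the scope of Theorem~\ref{class2}, and then invoke Theorem~\ref{absorbing}. The only point of divergence is the justification of $(T^+(B\otimes W),\Sigma_{B\otimes W})\cong(T^+B,\Sigma_B)$: you use a direct slicing argument (each slice $a\mapsto T(b\otimes a)$ is a trace on $W$, hence a scalar multiple of $\tau$), while the paper instead notes that the identification is obvious for $F\otimes W$ with $F$ finite-dimensional and then appeals to continuity of the functor $T^+$ under inductive limits --- both arguments work, and the latter sidesteps the care you correctly flag as needed for unbounded traces.
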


\begin{proof}
Taking $C=T^+A$ and $\omega=\|\cdot\|$, Theorem \ref{af main} gives a simple AF algebra $B$ and an isomorphism between $T^+B$ and $T^+A$ that maps the tracial states of $B$ onto the tracial states of $A$. The $C^*$-algebra $W$ is by construction a simple inductive limit of one-dimensional NCCW complexes each of which has trivial $K$-theory, so $B\otimes W$ is also of this form. Moreover, $T^+W$ is generated by a tracial state, so there is an isomorphism between $T^+(B\otimes W)$ and $T^+B$ that maps the tracial states of $B\otimes W$ onto the tracial states of $B$. (This is easy to see for $F\otimes W$ whenever $F$ is finite dimensional, and the assertion for $B$ follows from the continuity of the functor $T^+$.) Theorem \ref{class2} then implies that $A \cong B \otimes W$, and the second statement would follow directly from Conjecture \ref{absorbing}.
\end{proof}

\begin{remark}
This shows that every inductive limit as in the statement of Corollary \ref{w-stability} is isomorphic to an inductive limit of finite direct sums of Razak building blocks. Moreover, the full range of the invariant is exhausted, so we obtain a version of Theorem \ref{range}.
\end{remark}

In the opposite direction, we note that a certain dichotomy holds for well-behaved, simple, $W$-stable $C^*$-algebras.

\begin{proposition} \label{wstable}
Suppose that $A$ is a simple, separable, nuclear $C^*$-algebra that satisfies the UCT, such that $A\otimes W \cong A$. Then either $A \cong \mathcal{O}_2 \otimes \mathcal{K}$ or $A$ is stably projectionless.
\end{proposition}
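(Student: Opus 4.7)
My plan is to split into cases via R\o rdam's dichotomy for simple $\mathcal{Z}$-stable $C^*$-algebras. The hypothesis $A \cong A\otimes W$ already upgrades to $\mathcal{Z}$-stability: since $W$ absorbs $\mathcal{Z}$ (as recalled in the introduction), we have $A \cong A\otimes W \cong A\otimes W\otimes \mathcal{Z} \cong A\otimes \mathcal{Z}$, and R\o rdam's dichotomy then forces $A$ to be either purely infinite or stably finite.

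In the purely infinite case, $A$ is a simple, separable, nuclear, stably infinite $C^*$-algebra satisfying the UCT. The consequence of the Kirchberg--Phillips classification theorem recorded in the introduction --- that $W \otimes B \cong \mathcal{O}_2 \otimes \mathcal{K}$ for every such $B$ --- then immediately yields $A \cong A \otimes W \cong \mathcal{O}_2 \otimes \mathcal{K}$.

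In the stably finite case the goal is to show that $A$ is stably projectionless. Since $W$ lies in the UCT class (every Razak algebra does) and has trivial $K$-theory, the Rosenberg--Schochet K\"unneth formula applied with $W$ as one factor kills every term of the K\"unneth short exact sequence and gives $K_*(A) = K_*(A \otimes W) = 0$. Now suppose, for contradiction, that $p \in A \otimes \mathcal{K}$ is a nonzero projection. Then $[p] = 0$ in $K_0(A \otimes \mathcal{K}) = K_0(A)$, so there is a projection $q$ in some matrix amplification with $p \oplus q \sim q$ in the sense of Murray--von Neumann; viewing $q$ canonically as a subprojection of $p \oplus q$, stable finiteness of $A$ forces $p \oplus q = q$, i.e.\ $p = 0$, a contradiction.

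The dichotomy and the K\"unneth computation carry the weight of the argument; the only slightly delicate step is the Murray--von Neumann cancellation used in the stably finite case, but this is entirely standard and I do not anticipate any real obstacle.
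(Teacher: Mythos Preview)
Your argument is correct and uses the same core ingredients as the paper (the K\"unneth computation $K_*(A)=0$, the observation that $[p]=0$ in $K_0$ forces an infinite projection, a R\o rdam-type dichotomy, and Kirchberg--Phillips), but the organisation differs. You first upgrade $W$-stability to $\mathcal{Z}$-stability and invoke R\o rdam's dichotomy for simple $\mathcal{Z}$-stable algebras to split into the purely infinite and stably finite cases, then dispatch each case separately (the purely infinite one via the prepackaged fact $W\otimes A\cong\mathcal{O}_2\otimes\mathcal{K}$ from the introduction). The paper instead argues contrapositively: it assumes a nonzero projection exists in $A\otimes\mathcal{K}$, uses the $K_0=0$ trick to produce an infinite projection, and then appeals to the \emph{tensorially non-prime} dichotomy (rather than the $\mathcal{Z}$-stable one) plus R\o rdam's stability result to land in the Kirchberg--Phillips class. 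Your route is arguably cleaner because the case split is made explicit at the outset and you avoid passing to the corner $p(A\otimes\mathcal{K})p$; the paper's route is slightly more self-contained in that it does not need to import the $\mathcal{Z}$-stability of $W$ as an extra fact. Either way the mathematics is the same.
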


\begin{proof}
Since $W$ lies in the UCT class, we can apply the K\"unneth Theorem to deduce that $K_*(A) = K_*(A \otimes W)=0$. Suppose that $A \otimes \mathcal{K}$ contains a nonzero projection $p$, and let $B := p(A\otimes \mathcal{K})p$. Then $B$ is a unital $C^*$-algebra which, by  Brown's Theorem \cite[Theorem 2.8]{Brown:1977kq},  is stably isomorphic to $A$; in particular, $K_*(B)=0$. Hence $[p]=[0]$ in $K_0(B)$, so $p\oplus q \sim 0 \oplus q$ for some projection $q \in M_n(B)$. Hence $p \oplus q$ is infinite, so $B$ is `stably infinite' and so is $A$. Then, since $A \cong A \otimes W$ is tensorial non-prime, $A$ must in fact be purely infinite \cite[Theorem 4.1.10]{Rordam:2002yu} and stable \cite[Theorem 4.1.3]{Rordam:2002yu}. The Kirchberg-Phillips classification theorem \cite[Theorem 8.4.1]{Rordam:2002yu} then shows that $A \cong \mathcal{O}_2 \otimes \mathcal{K}$.
\end{proof}

\section{Asymptotically central sequences} \label{W is approx divisible}

If $A$ and $B$ are $C^*$-algebras and $C$ is a sub-$C^*$-algebra of $B$ then a sequence of $^*$-homomorphisms $\varphi_n: A \rightarrow B$ is said to be \emph{asymptotically central for $C$} if $\|[\varphi_n(a), c]\| \rightarrow 0$ as $n \rightarrow \infty$ for every $a\in A$ and $c\in C$. Such a sequence induces a $^*$-homomorphism $\varphi: A \rightarrow B_\infty \cap C'$. Here, $B_\infty$ is the limit algebra $l_\infty(B)/c_0(B)$, and there is a canonical inclusion of $B$ into $B_\infty$ as constant sequences; $B_\infty \cap C'$ denotes the relative commutant of $C$ in $B_\infty$.

Every strongly self-absorbing $C^*$-algebra $\mathcal{D}$ admits an asymptotically central sequence of unital endomorphisms \cite[Proposition 1.10]{Toms:2007uq}. Conversely, exhibiting asymptotically central $^*$-homomorphisms can be used to prove $\mathcal{D}$-stability---see \cite{Rordam:1994rm}, \cite[Theorem 7.2.2]{Rordam:2002yu} and \cite[Theorem 2.3]{Toms:2007uq}. It is not clear whether there exists such a relationship between asymptotically central sequences and tensor products in the nonunital case, essentially because we lack the notion of an infinite tensor product. Nevertheless, asymptotically central sequences are still interesting and useful in their own right, and we show below that we can adapt the approximate divisibility of $W$ to at least find an embedding of $W$ into the limit algebra $M(W)_\infty \cap W'$.

\begin{definition} \label{div}
A $C^*$-algebra $A$ is said to be \emph{approximately divisible} if for any $N\in \mathbb{N}$ there is a sequence of unital $^*$-homomorphisms $\mu_n$ from $M_N \oplus M_{N+1}$ into the multiplier algebra $M(A)$ which is asymptotically central for $A$.
\end{definition}

Approximate divisibility for unital $C^*$-algebras was studied in \cite{Blackadar:1992qy}, and the nonunital case appears for example in \cite[Definition 3.1.10]{Rordam:2002yu}.  Toms and Winter prove in \cite[\S2]{Toms:2005kq} that separable approximately divisible $C^*$-algebras are $\mathcal{Z}$-stable. Moreover, their Proposition 4.1 says that every simple Razak algebra is approximately divisible (hence $\mathcal{Z}$-stable). We show below that this is in fact an immediate consequence of classification (Theorem \ref{class}).

Let $\mathcal{Q}$ denote the universal UHF algebra (characterized by $K_0(\mathcal{Q}) = \mathbb{Q}$); $\mathcal{Q}$ is isomorphic to its infinite tensor product $\mathcal{Q}^{\otimes \infty}$ and (so) is strongly self-absorbing.

\begin{proposition} \label{uhf}
Every simple Razak algebra is $\mathcal{Q}$-stable (so absorbs every UHF algebra).
\end{proposition}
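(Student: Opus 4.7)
The strategy is to apply Razak's classification theorem (Theorem \ref{class}): I will show that $A\otimes\mathcal{Q}$ is itself a simple Razak algebra with the same tracial invariant as $A$, and hence isomorphic to $A$. Absorption of an arbitrary UHF algebra $U$ then follows from the universality $\mathcal{Q}\otimes U\cong\mathcal{Q}$, which gives $A\otimes U \cong A\otimes\mathcal{Q}\otimes U \cong A\otimes\mathcal{Q} \cong A$.

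The key observation is that tensoring a Razak building block with a matrix algebra produces another Razak building block: for any $k\in\mathbb{N}$ there is a canonical isomorphism
\[ A(n,n')\otimes M_k \cong A(nk, n'k). \]
Indeed, the block diagonal embedding $c\mapsto\diag(c,\ldots,c,0_n)$ from $M_n$ into $M_{n'}$, tensored with $\id_{M_k}$, is precisely the block diagonal embedding $c'\mapsto\diag(c',\ldots,c',0_{nk})$ from $M_{nk}$ into $M_{n'k}$, so both algebras arise as the same one-dimensional NCCW pullback (and similarly at $t=1$). Writing $A=\varinjlim A_i$ with $A_i$ Razak building blocks and $\mathcal{Q}=\varinjlim M_{k_i}$, we obtain $A\otimes\mathcal{Q} = \varinjlim(A_i\otimes M_{k_i})$ as an inductive limit of Razak building blocks. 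Since the minimal tensor product of two simple nuclear $C^*$-algebras is simple, $A\otimes\mathcal{Q}$ is itself a simple Razak algebra.

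To identify the invariants, note that because $\mathcal{Q}$ has a unique tracial state $\tau_\mathcal{Q}$ with $T^+\mathcal{Q}=\mathbb{R}_+\tau_\mathcal{Q}$, there is a bijection $T^+A\to T^+(A\otimes\mathcal{Q})$ sending $\tau\mapsto\tau\otimes\tau_\mathcal{Q}$; this is standard and amounts to the observation that any trace on $A\otimes\mathcal{Q}$, restricted to $A_i\otimes M_{k_i}$, must be proportional on the $M_{k_i}$ factor to its unique normalized trace, and compatibility along the system pins down a single element of $T^+A$. Because $(e_n\otimes 1_\mathcal{Q})$ is an approximate unit of $A\otimes\mathcal{Q}$ whenever $(e_n)$ is one for $A$, this correspondence preserves norms, so identifies $\Sigma_A$ with $\Sigma_{A\otimes\mathcal{Q}}$. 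Theorem \ref{class} then gives $A\otimes\mathcal{Q}\cong A$.

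The only genuinely substantive step is the building block identification $A(n,n')\otimes M_k\cong A(nk,n'k)$, which ensures we stay within the classification class of Theorem \ref{class}; the remainder is a direct invocation of classification together with standard facts about traces on UHF algebras and the universality of $\mathcal{Q}$.
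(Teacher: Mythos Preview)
Your proof is correct and follows essentially the same route as the paper's: both observe that $A(n,n')\otimes M_k$ is again a building block, deduce that $B\otimes U$ is a simple Razak algebra, identify the tracial invariants using the unique tracial state on the UHF factor, and invoke Theorem~\ref{class}. The only cosmetic difference is that the paper proves $B\otimes U\cong B$ directly for an arbitrary UHF algebra $U$, whereas you first establish $\mathcal{Q}$-stability and then deduce general UHF absorption via $\mathcal{Q}\otimes U\cong\mathcal{Q}$; you also spell out the building block identification and the norm-preservation of the trace bijection more explicitly than the paper does.
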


\begin{proof}
Let $B = \varinjlim (B_i, \beta_i)$ be a simple Razak algebra and let $U = \varinjlim (M_{k_i}, \alpha_i)$ be a UHF algebra. Note that if $A$ is a building block then so is $A \otimes M_k$ for every $k$, so $B \otimes U \cong \varinjlim (B_i \otimes M_{k_i}, \beta_i \otimes \alpha_i)$ is also a simple Razak algebra. Moreover, $(T^+(B \otimes U), \Sigma_{B \otimes U}) \cong (T^+B, \Sigma_B)$ since $U$ has a unique tracial state. Therefore, Theorem \ref{class} implies that $B \otimes U \cong B$.
\end{proof}

\begin{corollary} \label{divcor}
Let $A$ be a simple Razak algebra. Then $A$ is approximately divisible and there is an isomorphism $\varphi: A \rightarrow A \otimes \mathcal{Z}$ such that $\varphi \sim_{a.u.} \id_A \otimes 1_\mathcal{Z}$.
\end{corollary}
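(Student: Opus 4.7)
The plan is to deduce both parts of the corollary from the $\mathcal{Q}$-stability of $A$ just established in Proposition \ref{uhf}.

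First I would tackle approximate divisibility. Fix $N\in \mathbb{N}$. Since $N$ and $N+1$ are coprime, every sufficiently large integer $m$ can be written as $m=kN+\ell(N+1)$ with $k,\ell\ge1$, which yields a unital embedding
\[
M_N \oplus M_{N+1} \to M_m, \qquad a\oplus b \mapsto \diag(a\otimes 1_k,\; b\otimes 1_\ell).
\]
Choosing $m$ to be a matrix size appearing in the standard presentation of $\mathcal{Q}$ gives a unital embedding $\iota: M_N\oplus M_{N+1} \to \mathcal{Q}$. Writing $\mathcal{Q}\cong\mathcal{Q}^{\otimes\infty}$, let $\iota_n$ denote the composition of $\iota$ with the inclusion of $\mathcal{Q}$ as the $n$-th tensorand. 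Using the isomorphism $A\cong A\otimes\mathcal{Q}$ from Proposition \ref{uhf}, set
\[
\mu_n: M_N\oplus M_{N+1} \to M(A\otimes\mathcal{Q}) \cong M(A), \qquad \mu_n(x) = 1_{M(A)} \otimes \iota_n(x).
\]
Each $\mu_n$ is unital. Given $a\in A\otimes\mathcal{Q}$ and $\epsilon>0$, approximate $a$ to within $\epsilon$ by an element $a'$ lying in $A$ tensored with the first $k$ tensorands; for $n>k$, $\mu_n(x)$ commutes with $a'$, so $\|[\mu_n(x),a]\| \le 2\|x\|\epsilon$. Hence $(\mu_n)$ is asymptotically central for $A$, giving approximate divisibility.

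Next I would establish the isomorphism $A \cong A\otimes\mathcal{Z}$. The algebra $\mathcal{Q}\otimes\mathcal{Z}$ is a simple unital inductive limit of (prime) dimension drop algebras with $K_0=\mathbb{Q}$, trivial $K_1$, and a unique tracial state, so by Jiang and Su's classification \cite[Theorem 6.2]{Jiang:1999hb} it coincides with $\mathcal{Q}$. Tensoring with $A$ and applying Proposition \ref{uhf} gives
\[
A\otimes \mathcal{Z} \cong A\otimes\mathcal{Q}\otimes\mathcal{Z} \cong A\otimes\mathcal{Q} \cong A.
\]

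Finally I would upgrade this to the approximate unitary equivalence $\varphi\sim_{a.u.} \id_A\otimes 1_\mathcal{Z}$ using Toms and Winter's characterization of $\mathcal{D}$-stability for separable $C^*$-algebras and strongly self-absorbing $\mathcal{D}$ (see \cite[Theorem 2.2]{Toms:2007uq}); the standard two-sided approximate intertwining argument (\cite[Proposition A]{Rordam:1994rm}) proceeds without modification in the nonunital case, with implementing unitaries taken in $\widetilde{A\otimes\mathcal{Z}}$. The main obstacle I anticipate is precisely this last step: verifying that Toms--Winter's intertwining, originally phrased for unital algebras, transfers cleanly to nonunital approximately divisible $A$. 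Given separability of $A$, strong self-absorption of $\mathcal{Z}$, and the flexibility of the intertwining technique already used in Lemma \ref{tracial embedding}, no essential complication should arise.
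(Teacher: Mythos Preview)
Your argument is correct and follows essentially the same route as the paper: approximate divisibility via embeddings into far tensorands of $\mathcal{Q}^{\otimes\infty}$, $\mathcal{Z}$-stability from $\mathcal{Q}$-stability (the paper phrases this simply as ``UHF algebras are $\mathcal{Z}$-stable''), and the a.u.\ equivalence by citing \cite[Theorem 2.2]{Toms:2007uq}. Your closing worry is unnecessary, since that theorem is stated for separable (not necessarily unital) $C^*$-algebras and applies directly.
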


\begin{proof}
For each $k \in \mathbb{N}$, let $\iota_k$ be a unital embedding of $M_k$ into $\mathcal{Q}$ and define a sequence of unital $^*$-homomorphisms $\mu_{k,m} : M_k \rightarrow  M(A\otimes\mathcal{Q}^{\otimes\infty})$ by
\begin{equation} \label{6.1}
\mu_{k,m} := 1_{\widetilde A} \otimes 1_\mathcal{Q} \otimes \cdots \otimes 1_\mathcal{Q} \otimes \underbrace{\iota_k}_m\otimes 1_\mathcal{Q} \otimes \cdots
\end{equation}
(where $\widetilde A$ is the minimal unitization of $A$). Then the sequence $(\mu_{k,m})_{m=1}^\infty$ is asymptotically central for $A\otimes\mathcal{Q}^{\otimes\infty}$. Thus $A \cong A\otimes\mathcal{Q}^{\otimes\infty}$ is approximately divisible. The second claim follows directly from Proposition \ref{uhf} since UHF algebras are $\mathcal{Z}$-stable, or alternatively from \cite[Theorem 2.3]{Toms:2005kq}, which says that separable approximately divisible $C^*$-algebras are $\mathcal{Z}$-stable. That the isomorphism $\varphi$ satisfies $\varphi \sim_{a.u.} \id_A \otimes 1_\mathcal{Z}$ is automatic (see \cite[Theorem 2.2]{Toms:2007uq}).
\end{proof}

Next, we combine the proofs of Corollary \ref{divcor} and \cite[Proposition 2.2]{Toms:2005kq} to construct an embedding of $W$ into the central sequences algebra $M(W)_\infty \cap W'$.

\begin{theorem} \label{Q-stable}
Let $B$ be a separable $\mathcal{Q}$-stable $C^*$-algebra. Then there exists a $^*$-homomorphism $\sigma = (\sigma_i)_{i=1}^\infty : W \rightarrow (\widetilde B \otimes \mathcal{Q}^{\otimes \infty})_\infty \cap (B \otimes \mathcal{Q}^{\otimes \infty})' \subset M(B \otimes \mathcal{Q}^{\otimes \infty})_\infty \cap (B \otimes \mathcal{Q}^{\otimes \infty})' \cong M(B)_\infty \cap B'$ which satisfies the nondegeneracy condition
\begin{equation} \label{nondegen1}
\|\sigma(a)b\| = \|a\|\|b\| \quad \text{for every $a\in W$ and $b\in B$}, 
\end{equation}
and which is trace-preserving in the sense that
\begin{equation} \label{nondegen2}
\lim_{i\rightarrow \infty} \rho(\sigma_i(a)) = \tau(a) \quad \text{for every $a\in W$ and every $\rho \in T^+_1(B)$}
\end{equation}
(where $\tau$ is the unique tracial state on $W$).
\end{theorem}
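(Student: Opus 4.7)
The plan is to imitate the construction of Corollary \ref{divcor}, building $\sigma$ from a single trace-preserving embedding $\pi: W \to \mathcal{Q}$ placed in successive tensor factors of $\mathcal{Q}^{\otimes \infty}$. Given such a $\pi$, for each $i\in\mathbb{N}$ I would define
\[
\sigma_i(a) := 1_{\widetilde B} \otimes 1_\mathcal{Q}^{\otimes (i-1)} \otimes \pi(a) \otimes 1_\mathcal{Q}^{\otimes \infty} \in \widetilde B \otimes \mathcal{Q}^{\otimes \infty}, \qquad a \in W,
\]
in the manner of (\ref{6.1}). Any element of $B \otimes \mathcal{Q}^{\otimes \infty}$ is approximable by one of $B \otimes \mathcal{Q}^{\otimes k} \otimes 1^{\otimes \infty}$ for some $k$, and $\sigma_i(a)$ commutes with the latter as soon as $i > k$; so $(\sigma_i)$ descends to a *-homomorphism $\sigma: W \to (\widetilde B \otimes \mathcal{Q}^{\otimes \infty})_\infty \cap (B \otimes \mathcal{Q}^{\otimes \infty})'$.

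The two desired properties then follow directly from the form of $\sigma_i$. For $a \in W$ and $b \in B$,
\[
\sigma_i(a) \cdot b = b \otimes 1_\mathcal{Q}^{\otimes (i-1)} \otimes \pi(a) \otimes 1_\mathcal{Q}^{\otimes \infty}
\]
has norm $\|b\| \|\pi(a)\| = \|a\|\|b\|$ (assuming $\pi$ is isometric), giving the nondegeneracy condition. For any $\rho \in T^+_1(B)$, the tracial state $\tilde\rho \otimes \tau_\mathcal{Q}^{\otimes \infty}$ on $\widetilde B \otimes \mathcal{Q}^{\otimes \infty}$ (where $\tilde\rho$ is the canonical extension of $\rho$ to $\widetilde B$) extends $\rho$ and satisfies $\rho(\sigma_i(a)) = \tilde\rho(1_{\widetilde B}) \cdot \tau_\mathcal{Q}(\pi(a)) = \tau(a)$ for every $i$, provided $\tau_\mathcal{Q} \circ \pi = \tau$.

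The only substantive task is thus to construct a faithful $\pi: W \to \mathcal{Q}$ with $\tau_\mathcal{Q} \circ \pi = \tau$, which I would do via the inductive limit $W = \varinjlim (A_i, \varphi_i)$ of Proposition \ref{sequence}. Each restriction $\tau|_{A_i}$ is a tracial state of norm one, necessarily of the form $\tr \otimes \mu_i$ for some Borel probability measure $\mu_i$ on $[0,1]$. An individual trace-preserving $\pi_i: A_i \to \mathcal{Q}$ can be obtained by choosing $h_i \in \mathcal{Q}$ self-adjoint whose spectral distribution under $\tau_\mathcal{Q}$ is $\mu_i$ (feasible because $K_0(\mathcal{Q}) = \mathbb{Q}$ is dense in $\mathbb{R}$, so $\mu_i$ can be approximated in norm by finite rational-atomic measures realized by orthogonal projections) and composing
\[
A_i \hookrightarrow C([0,1], M_{n_i'}) \xrightarrow{\,f \otimes m \,\mapsto\, f(h_i) \otimes m\,} \mathcal{Q} \otimes M_{n_i'} \cong \mathcal{Q};
\]
a direct integration yields the trace identity $\tau_\mathcal{Q} \circ \pi_i = \tau|_{A_i}$.

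The hard part is to glue the $\pi_i$ into a single $\pi$, by an approximate intertwining argument analogous to the proof of Lemma \ref{tracial embedding}(i), which demands an approximate unitary uniqueness statement for trace-preserving *-homomorphisms from a building block into $\mathcal{Q}$. This is not immediately covered by Razak's local uniqueness (Proposition \ref{local uniqueness}, whose target is another building block). I would establish it either by revisiting the proof of that result and verifying that $\mathcal{Q}$ has an adequate $\delta$-density of eigenvalues, exploiting that $\mathcal{Q}$ is a norm-dense limit of matrix algebras (cf.\ Lemma \ref{eigenvalues}), or by invoking the classification of *-homomorphisms from one-dimensional NCCW complexes with trivial $K_1$ into sufficiently regular targets from \cite{Robert:2010qy}.
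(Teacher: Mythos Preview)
Your approach is different from the paper's and, once the final step is completed, would work; the contrast is instructive. You aim to build a single faithful, trace-preserving embedding $\pi: W \to \mathcal{Q}$ and then slide it along the tensor factors. The paper bypasses the construction of any global embedding $W\to\mathcal{Q}$. Instead, for each $i$ it takes the \emph{irreducible} representation $\pi_i = \ev_\infty: A_i \to M_{n_i}$ (neither faithful nor trace-preserving on $A_i$), places $M_{n_i}$ in a late tensor slot of $\mathcal{Q}^{\otimes\infty}$ via a unital map $\mu_{n_i,m_i}$, and extends the composite $\sigma_{i,m_i}: A_i \to \widetilde B \otimes \mathcal{Q}^{\otimes\infty}$ to a c.c.p.\ map $\overline\sigma_{i,m_i}$ on all of $W$ by Arveson. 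The sequence $(\overline\sigma_{i,m_i})_i$ is multiplicative only in the quotient, and the nondegeneracy and trace conditions follow from the asymptotic facts $\|\pi_j \circ \varphi_{ij}(a)\| \to \|a\|$ and $\tr_{n_j}(\pi_j \circ \varphi_{ij}(a)) \to \tau(a)$, which come for free from the explicit connecting maps of Proposition~\ref{sequence} and the estimate (\ref{6}).

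The trade-off: your route yields the stronger conclusion that each $\sigma_i$ is already a $*$-homomorphism and exactly trace-preserving, but the price is precisely the uniqueness step you flag as incomplete. Adapting Proposition~\ref{local uniqueness} to a $\mathcal{Q}$ target is not routine---the eigenvalue condition (ii) and the proof in \cite{Razak:2002kq} are tied to the building-block structure of the codomain---and invoking \cite{Robert:2010qy} is a far heavier import than anything the paper needs here. The paper's argument is entirely elementary and self-contained, using only Proposition~\ref{sequence}, equation (\ref{6}), and Arveson's extension theorem; no intertwining or uniqueness statement is required at all.
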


\begin{remark} \label{remark}
If $B$ is a $\mathcal{Q}$-stable $C^*$-algebra then every tracial state $\rho$ on $B \otimes \mathcal{Q}^{\otimes \infty}$
\begin{enumerate}[(i)]
\item is of the form $\tau \otimes \tau_\mathcal{Q}$ for some tracial state $\tau$ on $B$, where $\tau_\mathcal{Q}$ is the unique tracial state on $\mathcal{Q}$, and
\item extends uniquely to a tracial state on $\widetilde B \otimes \mathcal{Q}^{\otimes \infty} \subset M(B \otimes \mathcal{Q}^{\otimes \infty}) \cong M(B)$.
\end{enumerate}
This is what is meant in (\ref{nondegen2}). It may also be more natural to replace $B_\infty$ with an ultrapower $B_\omega$ for some free ultrafilter $\omega$, and the proof works equally well in this setting. 
\end{remark}

\begin{proof}
Write $W = \overline{\bigcup_{i=1}^\infty A_i}$ with the building blocks $A_i=A(n_i,(a_i+1)n_i)$ and inclusion maps $\varphi_{ij}$ given by Proposition \ref{sequence}. As in (\ref{6.1}), for each $k \in \mathbb{N}$ let $\iota_k$ be a unital embedding of $M_k$ into $\mathcal{Q}$, and define $^*$-homomorphisms $\mu_{k,m} : M_k \rightarrow \widetilde B \otimes \mathcal{Q}^{\otimes \infty} \subset M(B \otimes \mathcal{Q}^{\otimes \infty})$ by
\[
\mu_{k,m} := 1_{\widetilde B} \otimes 1_\mathcal{Q} \otimes \cdots \otimes 1_\mathcal{Q} \otimes \underbrace{\iota_k}_m\otimes 1_\mathcal{Q} \otimes \cdots.
\]
Note that the sequence $(\mu_{k,m})_{m=1}^\infty$ is asymptotically central for $B \otimes \mathcal{Q}^{\otimes \infty}$ and that $\|\mu_{k,m}(a)b\| \rightarrow \|a\|\|b\|$ as $m\rightarrow \infty$ for every $a\in M_k$ and $b\in B \otimes \mathcal{Q}^{\otimes \infty}$ (since this is true for finite tensors). Moreover, if $\rho$ is a tracial state on $B \otimes \mathcal{Q}^{\otimes \infty}$ and $\tr_k$ denotes the unique tracial state on $M_k$, then in the sense of Remark \ref{remark} we have $\rho(\mu_{k,m}(x)) = \tr_k(x)$ for every $x\in M_k$ and $m\in \mathbb{N}$.

For every $i \in \mathbb{N}$, let $\pi_i : A_i \rightarrow M_{n_i}$ be the irreducible representation $\ev_\infty$ (actually, any point evaluation will do), and define $\sigma_{i,m} := \mu_{n_i,m} \circ \pi_i : A_i \rightarrow \widetilde B \otimes \mathcal{Q}^{\otimes \infty}$. Let $(b_i)_{i=1}^\infty$ be dense in $B \otimes \mathcal{Q}^{\otimes \infty}$ and fix finite subsets $F_i \subset A_i$ such that $F_i \subset F_{i+1}$ and $\overline{\bigcup_{i=1}^\infty F_i} = W$. For each $i$, we can use the properties of the $^*$-homomorphisms $\mu_{n_i,m}$ to choose $m_i\ge m_{i-1}$ such that for $a \in F_i$ and $1 \le j \le i$ we have 
\begin{equation} \label{9}
\|[\sigma_{i,m_i}(a), b_j]\| \le 1/i
\end{equation}
and
\begin{equation} \label{10}
\bigg|\|\sigma_{i,m_i}(a)b_j\| - \|\pi_i(a)\|\|b_j\|\bigg| \le 1/i.
\end{equation}

Note also that, for every $a\in A_i$ and as $j\rightarrow \infty$ we have
\begin{equation} \label{11}
\|\pi_j \circ \varphi_{ij}(a)\| \rightarrow \|a\|
\end{equation}
by (\ref{small}) and (\ref{small3}) of Proposition \ref{sequence} and
\begin{equation} \label{15}
\tr_{n_j}(\pi_j \circ \varphi_{ij}(a)) \rightarrow \tau(a)
\end{equation}
by (\ref{6}) of Proposition \ref{simple}. Now we just patch together the $\sigma_{i,m_i}$ to get the desired map $\sigma$ (as in the proof of \cite[Proposition 2.2]{Toms:2005kq}). Since $\sigma_{i,m_i}$ is finite rank, by Arveson's extension theorem we can extend it to a linear, contractive (in fact c.c.p.) map $\overline{\sigma}_{i,m_i} : W \rightarrow \widetilde B \otimes \mathcal{Q}^{\otimes \infty}$. Define $\sigma$ to be the map $\sigma := (\overline{\sigma}_{i,m_i})_{i=1}^\infty : W \rightarrow (\widetilde B \otimes \mathcal{Q}^{\otimes \infty})_\infty$. Then $\sigma$ is linear, contractive and (since the $\sigma_{i,m_i}$ are $^*$-homomorphisms) is multiplicative on $\bigcup_{i=1}^\infty A_i$, hence on all of $W$. That is, $\sigma$ is a $^*$-homomorphism. By (\ref{9}), $\sigma(W)$ commutes with $B \otimes \mathcal{Q}^{\otimes \infty}$. For $a\in F_i$ and fixed $b_j$ we have
\[
\|\sigma(a)b_j\| = \limsup_{k} \|\sigma_{k,m_k}(\varphi_{ik}(a))b_j\| = \limsup_{k} \|\pi_k(\varphi_{ik}(a))\|\|b_j\| = \|a\|\|b_j\|
\]
by (\ref{10}) and (\ref{11}). Finally, for $a\in A_i$ and $\rho \in T^+_1(B \otimes \mathcal{Q}^{\otimes \infty})$ we have
\[
\lim_{k \rightarrow \infty} \rho(\sigma_{k,m_k}(a)) = \lim_{k \rightarrow \infty} \rho(\mu_{n_k,m_k} \circ \pi_k(\varphi_{ik}(a))) = \lim_{k \rightarrow \infty} \tr_{n_k}(\pi_k(\varphi_{ik}(a))) = \tau(a)
\]
by (\ref{15}). Therefore, $\sigma$ is a $^*$-homomorphism $W \rightarrow (\widetilde B \otimes \mathcal{Q}^{\otimes \infty})_\infty \cap (B \otimes \mathcal{Q}^{\otimes \infty})'$ that satisfies (\ref{nondegen1}) and (\ref{nondegen2}).
\end{proof}

It is easy to check that, by taking an approximate unit $(e_i)_{i=1}^\infty$ of $B$ and cutting $\sigma$ down by an appropriate subsequence of $(e_i\otimes 1_{\mathcal{Q}^{\otimes \infty}})_{i=1}^\infty$, we can get a trace-preserving completely positive contractive map from $W$ into $B_\infty \cap B'$ which preserves orthogonality (i.e.\ is `order zero'---see \cite{Winter:2009sf}).

\begin{corollary} \label{cent}
There exists a $^*$-homomorphism $\sigma = (\sigma_i)_{i=1}^\infty : W \rightarrow M(W)_\infty \cap W'$ which satisfies $\|\sigma(a)b\| = \|a\|\|b\|$ for every $a,b\in W$ and $\lim_{i\rightarrow \infty} \tau(\sigma_i(a)) = \tau(a)$ for every $a\in W$.
\end{corollary}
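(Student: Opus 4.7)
The plan is simply to specialize Theorem \ref{Q-stable} to the case $B = W$, so the main task is to verify that the hypothesis holds and that the two conclusions of the corollary fall out cleanly. First I would note that $W$ is a simple Razak algebra, so by Proposition \ref{uhf} (or equivalently Corollary \ref{divcor}) we have $W \cong W \otimes \mathcal{Q}^{\otimes \infty}$; that is, $W$ is $\mathcal{Q}$-stable, which is exactly the hypothesis of Theorem \ref{Q-stable}.

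Next I would invoke Theorem \ref{Q-stable} with $B = W$ to obtain a $*$-homomorphism
\[
\sigma = (\sigma_i)_{i=1}^\infty : W \rightarrow (\widetilde{W} \otimes \mathcal{Q}^{\otimes \infty})_\infty \cap (W \otimes \mathcal{Q}^{\otimes \infty})' \subset M(W \otimes \mathcal{Q}^{\otimes \infty})_\infty \cap (W \otimes \mathcal{Q}^{\otimes \infty})',
\]
and use the $\mathcal{Q}$-stability isomorphism $W \otimes \mathcal{Q}^{\otimes \infty} \cong W$ (which yields $M(W \otimes \mathcal{Q}^{\otimes \infty}) \cong M(W)$) to regard $\sigma$ as a $*$-homomorphism $W \to M(W)_\infty \cap W'$. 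The norm condition $\|\sigma(a) b\| = \|a\| \|b\|$ for every $a, b \in W$ is then immediate from (\ref{nondegen1}) applied with $B = W$.

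For the trace condition, the point is that $T^+_1(W) = \{\tau\}$ is a singleton, so (\ref{nondegen2}) reduces directly to $\lim_{i \to \infty} \tau(\sigma_i(a)) = \tau(a)$ for every $a \in W$; under the identification of Remark \ref{remark}, the extension of $\tau$ to $\widetilde{W} \otimes \mathcal{Q}^{\otimes \infty}$ as $\tau \otimes \tau_\mathcal{Q}$ corresponds precisely to the canonical extension of $\tau$ to $M(W)$ via the $\mathcal{Q}$-stability isomorphism, so the condition really is $\lim_i \tau(\sigma_i(a)) = \tau(a)$ in $M(W)$. There is no substantive obstacle here; all the real work was done in Theorem \ref{Q-stable} and in establishing the $\mathcal{Q}$-stability of simple Razak algebras, so the corollary is essentially just a matter of unpacking.
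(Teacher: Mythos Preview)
Your proposal is correct and is exactly the intended argument: the paper states Corollary \ref{cent} immediately after Theorem \ref{Q-stable} without proof, as a direct specialization to $B=W$ using the $\mathcal{Q}$-stability of $W$ from Proposition \ref{uhf} and the uniqueness of $\tau$. There is nothing to add.
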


\end{document}